\def\DP{{}^{\texttiny{C}}_0\! \mathcal{D}}
\def\Arg{\text{Arg}}
\def\Rset{{\mathbb{R}}}
\def\Cset{{\mathbb{C}}}
\def\Nset{{\mathbb{N}}}
\def\eu{{\ensuremath{\mathrm{e}}}}
\def\iu{{\ensuremath{\mathrm{i}}}}
\def\du{{\ensuremath{\mathrm{d}}}}
\def\texttiny#1{{\text{\tiny{#1}}}}
\newtheorem{theorem}{Theorem}
\newtheorem{lemma}{Lemma}
\newtheorem{proposition}{Proposition}
\newtheorem{remark}{Remark}
\begin{document}
\thispagestyle{empty}
\begin{center}

{\bf\Large Stability of fractional-order systems with Prabhakar derivatives\footnote{This research was funded by the COST Action CA 15225 - ``Fractional-order systems- analysis, synthesis and their importance for future design". The work of R. Garrappa was also partially supported by a GNCS-INdAM 2020 Project.}}

\vspace{8mm}

{\large Roberto Garrappa$^1$,\  Eva Kaslik$^{2}$}

\vspace{3mm}

{\em $^1$Department of Mathematics, University of Bari\\ Via E. Orabona 4, 70126 Bari, Italy\\ Member of the INdAM Research Group GNCS, Italy\\         
E-mail: roberto.garrappa@uniba.it\\

$^2$Department of Mathematics and Computer Science\\ 
West University of Timi\c{s}oara\\ 
Bd. V. P\^{a}rvan 4, 300223 Timi\c{s}oara, Romania\\
E-mail: eva.kaslik@e-uvt.ro}

\vspace{5mm}
\end{center}
\vspace{8mm}

\noindent\textbf{Abstract.}
Fractional derivatives of Prabhakar type are capturing an increasing interest since their ability to describe anomalous relaxation phenomena (in dielectrics and other fields) showing a simultaneous nonlocal and nonlinear behaviour. In this paper we study the asymptotic stability of systems of differential equations with the Prabhakar derivative, providing an exact characterization of the corresponding stability region. Asymptotic expansions (for small and large arguments) of the solution of linear differential equations of Prabhakar type and a numerical method for nonlinear systems are derived. Numerical experiments are hence presented to validate theoretical findings.\\

\noindent\textbf{Keywords:} Fractional calculus; fractional Prabhakar derivative; asymptotic stability; stability region.

\section{Introduction}

The Prabhakar function is named after the Indian mathematician Tilak Raj Prabhakar who introduced  in 1971 a generalization to three parameters of the Mittag-Leffler function \cite{GorenfloKilbasMainardiRogosin2014} and studied a convolution integral operator with this function as kernel \cite{Prabhakar1971}. 

After their introduction, Prabhakar's function and integral have been overlooked for a long time until, in the first years of the twenty-first century, the connections with the Havriliak-Negami (HN) dielectric model \cite{HavriliakNegami1966} have been put in light. The HN model was introduced to incorporate the asymmetry and broadness observed in the dielectric dispersion of some polymers and today it is recognized as manifestation of the simultaneous nonlocality and nonlinearity \cite{Miskinis2009,StephanovichGlinchukHilczerKirichenko2002} in the response of complex and heterogeneous systems. For these reasons operators based on the Prabhakar function are employed to describe in the time-domain sophisticated  relaxation models in several areas (e.g., see \cite{BiaCaratelliMesciaCicchettiMaionePrudenzano2014,CausleyPetropoulos2013,colombaro2018storage,GarrappaMainardiMaione2016,GarrappaMaione2017,giusti2018comment,Giusti2020,GiustiColombaro2018,Gorska2019,KhamzinNigmatullinPopov2014,Machado2015,Sandev2017,ZhaoSun2019}).

In 2002 the Prabhakar integral was studied in the context of weakly-singular Volterra integral equations and an interpretation in the framework of fractional calculus was provided \cite{KilbasSaigoSaxena2002}, thus leading two years later to the proposition of a left-inverse operator of the Prabhakar fractional integral \cite{KilbasSaigoSaxena2004}. A regularization of this inverse, known as the fractional Prabhakar derivative, was introduced in \cite{DovidioPolito2018} and one year later all these preliminary ideas were incorporated in a more general theory \cite{GarraGorenfloPolitoTomovski2014}, successively deepened in \cite{GarraGarrappa2018_CNSNS} and \cite{Garrappa2016_CNSNS}. We refer to the recent survey paper \cite{GiustiColombaroGarraGarrappaPolitoPopolizioMainardi2020} for a comprehensive history and  collection of background material and applications of the \emph{Prabhakar fractional calculus}.


Theoretical aspects of the Prabhakar  derivative have been studied in a fair number of works. However, there still persist some not completely clear aspects which must be deepened in order to profitably employ the fractional Prabhakar derivative in the analysis and simulation of linear and nonlinear systems. 

This paper focuses on the asymptotic stability of fractional-order systems with Prabhakar derivatives. Due to the nonlinear dependence of this derivative on a certain number of parameters, this is a difficult and highly complex task, and there are only a couple of previously published papers which have tackled this issue \cite{alidousti2020stability,derakhshan2016asymptotic}, obtaining some sufficient conditions for the asymptotic stability of linear systems with Prabhakar derivative. Consequently, our aim is to clarify several aspects presented in \cite{alidousti2020stability,derakhshan2016asymptotic} and to give a rigorous and complete characterisation of the stability region of fractional-order systems with Prabhakar derivatives, essentially obtaining a generalization of the well known Matignon theorem \cite{Matignon} for standard fractional calculus. 

Our main result is formulated as a necessary and sufficient condition for the asymptotic stability of a linear autonomous systems with Prabhakar derivatives and an application to the study of nonlinear systems is also provided.


This paper is organized in the following way. Section \ref{S:Background} is devoted to present a short review of the basic material on the Prabhakar function and on the fractional Prabhakar calculus. Section \ref{S:AsymptoticStability} describes the the main results concerning stability properties of systems of differential equations with the fractional Prabhakar derivative. A characterisation of the corresponding stability region by means of the root locus method is presented in Section \ref{S:StabilityRegion}. In Section \ref{S:Solution} we derive the asymptotic expansion of the solution of linear differential equation with the Prabhakar derivative together with a numerical method for solving nonlinear problems. Finally, some numerical experiments are presented in Section \ref{S:NumericalExperiments} with the aim of validating the theoretical results.

\section{Preliminaries on Prabhakar  function and Prabhakar calculus}\label{S:Background}

Given three real parameters $\alpha$, $\beta$ and $\gamma$, the Prabhakar function is defined by the series representation
\[
	E_{\alpha,\beta}^{\gamma} (z) = \frac{1}{\Gamma(\gamma)} \sum_{k=0}^{\infty} \frac{\Gamma(\gamma+k) z^{k}}{k! \Gamma(\alpha k + \beta)} 
	, \quad \alpha > 0 , \quad z \in \Cset ,
\]
where, as usual, $\Gamma(x) = \int_0^\infty t^{x-1} \eu^{-t} \du t$ is the Euler-Gamma function. This is an entire function of order $\rho=1/\alpha$ and type $\sigma=1$.

More generally, the Prabhakar function is defined for complex parameters, provided that $\Re(\alpha)>0$; in this paper we prefer however to focus just on real parameters in view of their wider range of applications.

It is immediate to see that when $\gamma=1$ the function $E_{\alpha,\beta}^{\gamma} (z)$ reduces to the standard two parameter ML function $E_{\alpha,\beta}(z)$, when $\beta=\gamma=1$ to the one-parameter ML function $E_{\alpha}(z)$ and when $\alpha=\beta=\gamma=1$ the correspondence with the exponential function $\eu^{z}$ is obtained. Whenever $\gamma=-j$, with $j \in \Nset$, it is easy to verify that the Prabhakar function is the $j$-th degree polynomial 
\[
	E_{\alpha,\beta}^{-j}(z) = \sum_{k=0}^{j} (-1)^k\binom{j}{k} \frac{z^{k}}{\Gamma(\alpha k + \beta)} .
\]

Although an analytical representation of the Laplace transform (LT) of $E_{\alpha,\beta}^{\gamma}(z)$ is not known, it is possible to evaluate the LT of the generalization 
\[
	e_{\alpha,\beta}^{\gamma}(t; \omega) = t^{\beta-1} E_{\alpha,\beta}^{\gamma}(t^{\alpha} \omega)
	, \quad t > 0, \quad \omega \in \Cset,
\]
which, for $\Re(s)> 0$ and $|s| > |\omega|^{\frac{1}{\alpha}}$, is
\[
	{\mathcal E}_{\alpha,\beta}^{\gamma}(s; \omega) \coloneqq {\mathcal L} \Bigl( e_{\alpha,\beta}^{\gamma}(t; \omega) \, ; \, s \Bigr) = \frac{s^{\alpha\gamma - \beta}}{(s^{\alpha} - \omega)^{\gamma}}  .
\]

Since the Prabhakar function, and in particular its generalization $e_{\alpha,\beta}^{\gamma}(t; \omega)$, is employed for the description of  relaxation phenomena, it is of importance to identify the range of parameters for which it turns out to be completely monotonic (CM). We recall that a function $f:(0,+\infty) \to \Rset$ is CM if it has derivatives of any order $k \in \Nset$ and $(-1)^k f^{(k)}(t) \ge 0$ on $(0,+\infty)$. The CM properties of the Prabhakar function have been studied in \cite{MainardiGarrappa2015_JCP,TomovskiPoganySrivastava2014} and it is possible to prove that $e_{\alpha,\beta}^{\gamma}(t; \omega)$ is CM if
\begin{equation}
\label{cond.complete.monot}
   \omega<0,\quad 0 < \alpha \le 1, \quad 0 < \alpha \gamma \le \beta \le 1.
\end{equation}

The asymptotic behaviour of the Prabhakar function for large arguments in the whole complex plane has been studied in \cite{GarraGarrappa2018_CNSNS,Paris2010,Paris2019}. In particular, for $0<\alpha\le 1$ it is
\[
	E_{\alpha,\beta}^{\gamma}(z) \sim \left\{\begin{array}{ll}
		{\mathcal F}_{\alpha,\beta}^{\gamma}(z) + {\mathcal A}_{\alpha,\beta}^{\gamma}(z \eu^{\mp\pi\iu})
		& |\arg z | < \frac{\alpha \pi}{2} \\
		{\mathcal A}_{\alpha,\beta}^{\gamma}(z \eu^{\mp\pi\iu}) + {\mathcal F}_{\alpha,\beta}^{\gamma}(z)
		& \frac{\alpha \pi}{2} < |\arg z | < \alpha \pi \\
		{\mathcal A}_{\alpha,\beta}^{\gamma}(z \eu^{\mp\pi\iu}) & \alpha \pi < |\arg z | \le \pi \\
	\end{array}\right. 
	\quad 
\]
as $|z| \to \infty$ and where the sign in $\eu^{\mp\pi\iu}$ must be understood as negative for $z$ in the upper complex half-plane and positive otherwise. We have adopted the convention proposed in \cite{GiustiColombaroGarraGarrappaPolitoPopolizioMainardi2020} by which in each sum it is first presented the dominant term. The exponential and algebraic expansions ${\mathcal F}_{\alpha,\beta}^{\gamma}(z)$ and ${\mathcal A}_{\alpha,\beta}^{\gamma}(z \eu^{\mp\pi\iu})$ are respectively 
\[
	{\mathcal F}_{\alpha,\beta}^{\gamma}(z) = \frac{1}{\Gamma(\gamma)} \eu^{z^{1/\alpha}} z^{\frac{\gamma-\beta}{\alpha}} \frac{1}{\alpha^{\gamma}} \sum_{k=0}^{\infty} c_{k} z^{-\frac{k}{\alpha}} 
\]
and
\[
	{\mathcal A}_{\alpha,\beta}^{\gamma}(z) = \frac{z^{-\gamma}}{\Gamma(\gamma)} \sum_{k=0}^{\infty} \frac{(-1)^{k} \Gamma(k+\gamma)}{k! \Gamma(\beta-\alpha(k+\gamma))} z^{-k} ,
\]
where $c_k$ are the coefficients in the inverse factorial expansion of 
\begin{equation}\label{eq:InverseFactorialExpansion}
	F_{\alpha,\beta}^{\gamma}(s) 
	= \frac{\Gamma(\gamma+s)\Gamma(\alpha s + 1-\gamma+\beta)}{\Gamma(s+1)\Gamma(\alpha s + \beta)} ,
\end{equation}
as $|s| \to \infty$, with $|\arg(s)| \le \pi - \epsilon$ for any arbitrarily small $\epsilon >0$. 
The first few entries of  coefficients $c_k$ are explicitly provided  in  \cite{Paris2019} but they can be evaluated by an algorithm described in \cite{Paris2010} and further explained in \cite{GarraGarrappa2018_CNSNS}.

For $\alpha,\beta,\gamma > 0$  the Prabhakar fractional integral of a function $f \in L_1[0,T]$ is the convolution of $f$ with the \textit{Prabhakar kernel} $e_{\alpha,\beta}^{\gamma}(t; \omega)$, namely
\begin{equation}\label{fractional.integral}
{}_0{\mathcal J}_{\alpha,\beta,\omega}^{\gamma} f(t) = \int_0^t  e_{\alpha,\beta}^{\gamma}(t-\tau;\omega) f(\tau) \du \tau .
\end{equation}
 
Its inverse operator regularized in Caputo's sense provides, in the case $0<\beta \le 1$ and for functions $f \in AC[0,T]$, the fractional Prabhakar derivative 
\begin{equation}\label{Prabhakar.operator}
    \DP^\gamma_{\alpha,\beta,\omega} f(t)=\int_0^t e^{-\gamma}_{\alpha,1-\beta}(t-\tau;\omega)f'(\tau)d\tau 
\end{equation}
(we refer to \cite{Giusti2020} for a discussion of the special case $\beta=1$).

\section{Asymptotic stability of linear systems of Prabhakar-type FDEs}\label{S:AsymptoticStability}

Due to the main interest in practical applications, we will assume throughout this paper that the parameters $\alpha$, $\beta$ and $\gamma$ fulfill the condition (\ref{cond.complete.monot}) under which the Prabhakar kernel is CM.

Consider the following linear system of Prabhakar-type fractional-order differential equations:
\begin{equation}\label{sys.Prabhakar}
     \DP^\gamma_{\alpha,\beta,\omega} y(t)=Ay(t),
\end{equation}
coupled with the initial condition $y(0)=y_0$, and where $\DP^\gamma_{\alpha,\beta,\lambda}$ is the  Prabhakar differential operator (regularized in the Caputo sense) defined according to (\ref{Prabhakar.operator}). 
 
System (\ref{sys.Prabhakar}) is equivalent to the following system of weakly singular Volterra integral equations of convolution type (see, for example \cite{Giusti2020,kochubei2011general}):\begin{equation}
    y(t) 
    = y_0 + 
    A \int_0^t  e_{\alpha,\beta}^{\gamma}(t-\tau;\omega) y(\tau) \du \tau.
\end{equation}


For the theory of linear Volterra integral equations, including the case when the convolution kernel is completely monotonic, we refer to   \cite{brunner2017volterra,gripenberg1990volterra,Lubich1986,tsalyuk1979volterra}.

From the LT of the Prabhakar kernel we observe that the characteristic equation associated to system \eqref{sys.Prabhakar} is 
\begin{equation}\label{eq.char}
\det\left(s^{\beta-\alpha\gamma}(s^\alpha-\omega )^\gamma I-A\right)=0 ,
\end{equation}
where, according to \cite{Doetsch}, the principal values (first branches) of the complex power functions are taken into account.

It is easy to see that $s$ is a root of the characteristic equation \eqref{eq.char} if and only if there exists an eigenvalue $\lambda$ of the matrix $A$ such that 
\begin{equation}\label{eq.char.lambda}
    s^{\beta-\alpha\gamma}(s^\alpha-\omega )^\gamma=\lambda.
\end{equation}

We obtain the following characterisation of the asymptotic stability of system \eqref{sys.Prabhakar}, in terms of the roots of its characteristic equation:
\begin{proposition}
The linear system \eqref{sys.Prabhakar} is asymptotically stable if and only if 
\[\sigma(A)\subset S_{\alpha,\beta,\omega}^\gamma\]
where $\sigma(A)$ denotes the spectrum of the matrix $A$ and
\[S_{\alpha,\beta,\omega}^\gamma=\{\lambda\in\mathbb{C}~:~ s^{\beta-\alpha\gamma}(s^\alpha-\omega )^\gamma\neq \lambda,~\forall ~\Re(s)\geq 0\}.\]
\end{proposition}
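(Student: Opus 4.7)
The plan is to characterize the long-time behaviour of the solution through its Laplace transform. Applying the LT to the Volterra reformulation of (\ref{sys.Prabhakar}) and using the expression for ${\mathcal E}_{\alpha,\beta}^{\gamma}(s;\omega)$ yields
\[
  Y(s) = \bigl(s^{\beta-\alpha\gamma}(s^\alpha-\omega)^\gamma I - A\bigr)^{-1} s^{\beta-\alpha\gamma-1}(s^\alpha-\omega)^\gamma\, y_0.
\]
Because the determinant on the left factors as $\prod_{\lambda\in\sigma(A)} \bigl(s^{\beta-\alpha\gamma}(s^\alpha-\omega)^\gamma - \lambda\bigr)^{m_\lambda}$, the singularities of $Y(s)$ off the branch cut coincide with the roots of the scalar equations (\ref{eq.char.lambda}) as $\lambda$ ranges over $\sigma(A)$. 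So the proposition reduces to showing that asymptotic stability is equivalent to the absence of any such root in the closed right half-plane.

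For sufficiency, assume $\sigma(A)\subset S_{\alpha,\beta,\omega}^{\gamma}$, so $Y(s)$ is holomorphic on $\Re(s)\ge 0$ (with the principal branches of the complex powers). I would invert the LT along a Bromwich line and then deform onto a Hankel keyhole wrapping the branch cut $(-\infty,0]$. The deformation is legitimate because, under condition (\ref{cond.complete.monot}), $\bigl(s^{\beta-\alpha\gamma}(s^\alpha-\omega)^\gamma I-A\bigr)^{-1} s^{\beta-\alpha\gamma-1}(s^\alpha-\omega)^\gamma \to 0$ as $|s|\to\infty$ uniformly in $|\arg s|\le\pi$. The resulting Hankel integral is then estimated to vanish as $t\to\infty$ by a standard Watson-type argument, using the algebraic asymptotic ${\mathcal A}_{\alpha,\beta}^{\gamma}$ recalled in Section~\ref{S:Background} together with uniform resolvent bounds on the keyhole.

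Necessity is obtained by contraposition: if some $\lambda\in\sigma(A)$ lies outside $S_{\alpha,\beta,\omega}^{\gamma}$, then (\ref{eq.char.lambda}) has a solution $s_0$ with $\Re(s_0)\ge 0$, which is a genuine pole of $Y(s)$ in the closed right half-plane and contributes to $y(t)$ a term that is exponentially growing if $\Re(s_0)>0$, and purely oscillatory or constant if $\Re(s_0)=0$; choosing $y_0$ in the corresponding generalized eigenspace prevents $y(t)\to 0$. I expect the main obstacle to be the careful treatment of $s=0$, which is simultaneously the endpoint of the branch cut and, when $\beta>\alpha\gamma$, a potential pole of $Y(s)$ if $0\in\sigma(A)$. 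Matching the orders of vanishing of $s^{\beta-\alpha\gamma-1}(s^\alpha-\omega)^\gamma$ and of the resolvent factor near the origin is what dictates whether the corresponding contribution is decaying, bounded or static, and the definition of $S_{\alpha,\beta,\omega}^{\gamma}$ is precisely tailored to exclude the non-decaying cases; a secondary technical nuisance is verifying that the keyhole deformation is valid in the presence of the two coupled power singularities, which should reduce to the inequality $\beta\le 1$ from (\ref{cond.complete.monot}).
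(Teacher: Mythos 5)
Your architecture --- Laplace inversion of $\hat y(s)=\mathcal H(s)y_0$, deformation onto a keyhole around the branch cut, residues at the zeros of the characteristic function --- is the natural one and is essentially what the paper itself relies on: the Proposition is stated without proof, but Section \ref{S:Solution} carries out exactly this inversion and residue subtraction, and Lemma \ref{lemma.number.roots} supplies the finiteness of the set of unstable roots that your residue sum implicitly needs. The necessity direction is sound once you check that the residue at a root $s_0\neq 0$ with $\Re(s_0)\geq 0$ is nonzero; this holds because the numerator $s^{\beta-\alpha\gamma-1}(s^\alpha-\omega)^\gamma$ cannot vanish there, since $\arg(s_0^\alpha)\in[-\alpha\pi/2,\alpha\pi/2]$ while $\omega<0$.

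The genuine gap is in the treatment of $s=0$, and your diagnosis of when it bites is off. The configuration you worry about ($0\in\sigma(A)$ with $\beta>\alpha\gamma$) is already excluded by the hypothesis: in that case $s=0$ solves \eqref{eq.char.lambda} with $\lambda=0$, so $0\notin S_{\alpha,\beta,\omega}^\gamma$. The configuration that actually breaks your sufficiency argument is the admissible edge case $\beta=\alpha\gamma$ of \eqref{cond.complete.monot}, which the paper treats throughout (see the last panel of Figure 1 and Lemma \ref{lem.Lambda}). There
\[
\mathcal H(s)=\frac{s^{-1}(s^\alpha-\omega)^\gamma}{(s^\alpha-\omega)^\gamma-\lambda}
\]
retains a genuine simple pole at the origin, with residue $|\omega|^\gamma/(|\omega|^\gamma-\lambda)\neq 0$ for every $\lambda\in S_{\alpha,\beta,\omega}^\gamma$, so the keyhole contribution does not vanish as $t\to\infty$: the final-value theorem gives $y(t)\to |\omega|^\gamma(|\omega|^\gamma-\lambda)^{-1}y_0$. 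This is not an artifact of the method: for $\gamma=1$, $\beta=\alpha$ the kernel is $e^{-1}_{\alpha,1-\alpha}(u;\omega)=u^{-\alpha}/\Gamma(1-\alpha)-\omega$, the problem reduces to the Caputo equation ${}^{C}D^{\alpha}y=(A+\omega)y-\omega y_0$, and the solution converges to the nonzero equilibrium $\omega y_0/(A+\omega)$ even though $A\in S_{\alpha,\alpha\gamma,\omega}^\gamma$. So in the case $\beta=\alpha\gamma$ solutions tend to a nonzero constant multiple of $y_0$ rather than to zero, and your proof must either exclude that case or make explicit the weaker notion of stability being established; deferring the point as a ``technical nuisance'' hides the one place where the claimed equivalence is genuinely delicate. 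Two secondary items also need to be argued rather than asserted: the legitimacy of the contour deformation given only $O(|s|^{-1})$ decay of $\mathcal H$, and the integrability of $\mathcal H$ along the two edges of the cut, where zeros of the characteristic function at $\arg s=\pm\pi$ must be indented around.
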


In what follows, we will give a complete characterisation of the \textit{stability region} $S_{\alpha,\beta,\omega}^\gamma$.

\section{Stability region by the root locus method}\label{S:StabilityRegion}

The boundary of the \textit{stability region} $S_{\alpha,\beta,\omega}^\gamma$ will be determined using the root locus method. We first give the following preliminary results

\begin{lemma}\label{lem.Lambda} The function $\Lambda:\left[0,\frac{\alpha\pi}{2}\right)\rightarrow \mathbb{C}$ given by:
\[\Lambda(\theta)=|\omega|^{\frac{\beta}{\alpha}}~\dfrac{(\sin\theta)^{\frac{\beta}{\alpha}-\gamma}\left(\sin\frac{\alpha\pi}{2}\right)^\gamma}{\left(\sin\left(\frac{\alpha\pi}{2}-\theta\right)\right)^\frac{\beta}{\alpha}}~ \eu^{\iu\left[\gamma\theta+(\beta-\alpha\gamma)\frac{\pi}{2}\right]}\]
is a $C^\infty$ function which satisfies 
\[\lim_{\theta\rightarrow 0} \Lambda(\theta)=\begin{cases}
0 &,~\text{if}~\beta-\alpha\gamma>0,\\
|\omega|^\gamma &,~\text{if}~\beta-\alpha\gamma=0.
\end{cases}\]
\[\lim_{\theta\rightarrow\frac{\alpha\pi}{2}}|\Lambda(\theta)|=\infty\quad\text{and}\quad \lim_{\theta\rightarrow\frac{\alpha\pi}{2}}\Arg(\Lambda(\theta))=\frac{\beta\pi}{2}.\]
Moreover: 
\[0\leq \Arg(\Lambda(\theta))\leq \frac{\beta\pi}{2}, \quad \forall~ \theta\in\left[0,\frac{\alpha\pi}{2}\right).\]
The image of the function $\Lambda$ in the complex plane, i.e. the curve $\Psi_{\alpha,\beta,\omega}^\gamma$ defined by the parametric equation
\[\Psi_{\alpha,\beta,\omega}^\gamma:
\quad \lambda=\Lambda(\theta) ,\quad\theta\in \left[0,\frac{\alpha\pi}{2}\right), 
\]
is a simple curve, included in the first quadrant of the complex plane. 
\end{lemma}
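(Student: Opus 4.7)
The strategy is to factor $\Lambda$ into a positive modulus and a real phase,
\[
\Lambda(\theta)=M(\theta)\,\eu^{\iu\varphi(\theta)},\qquad \varphi(\theta)=\gamma\theta+(\beta-\alpha\gamma)\tfrac{\pi}{2},
\]
with
\[
M(\theta)=|\omega|^{\beta/\alpha}\,\frac{(\sin\theta)^{\beta/\alpha-\gamma}\bigl(\sin\tfrac{\alpha\pi}{2}\bigr)^\gamma}{\bigl(\sin(\tfrac{\alpha\pi}{2}-\theta)\bigr)^{\beta/\alpha}},
\]
and then to read each of the assertions directly off this decomposition. On the open interval $(0,\alpha\pi/2)$ both $\sin\theta$ and $\sin(\alpha\pi/2-\theta)$ are strictly positive, so the real powers in $M$ are $C^\infty$, and together with the affine phase $\varphi$ this gives $\Lambda\in C^\infty(0,\alpha\pi/2)$; continuity at $\theta=0$ will be recovered from the limit computation below.

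For the two endpoint limits I would simply substitute, using that \eqref{cond.complete.monot} yields $\beta/\alpha-\gamma\ge 0$ and $\gamma>0$. As $\theta\to 0^+$ the only delicate factor is $(\sin\theta)^{\beta/\alpha-\gamma}$: if $\beta>\alpha\gamma$ it tends to $0$ while the rest stays bounded, so $\Lambda(\theta)\to 0$; if $\beta=\alpha\gamma$ the exponent vanishes, the two $\sin(\alpha\pi/2)$ powers collapse against each other, the phase is identically $0$, and one obtains $\Lambda(\theta)\to|\omega|^\gamma$. As $\theta\to(\alpha\pi/2)^-$ the numerator of $M$ stays bounded and strictly positive while $\bigl(\sin(\alpha\pi/2-\theta)\bigr)^{\beta/\alpha}\to 0$, hence $|\Lambda(\theta)|\to\infty$; direct evaluation gives $\varphi(\alpha\pi/2)=\gamma(\alpha\pi/2)+(\beta-\alpha\gamma)\pi/2=\beta\pi/2$, matching the claimed limit of the argument.

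The argument bound and the simplicity of $\Psi_{\alpha,\beta,\omega}^\gamma$ both reduce to properties of $\varphi$. Since $M>0$, whenever $\varphi(\theta)$ belongs to the principal range one has $\Arg(\Lambda(\theta))=\varphi(\theta)$. Condition \eqref{cond.complete.monot} gives $(\beta-\alpha\gamma)\pi/2\ge 0$ and $\beta\pi/2\le\pi/2$, so the strictly increasing affine function $\varphi$ maps $[0,\alpha\pi/2]$ into $\bigl[(\beta-\alpha\gamma)\pi/2,\beta\pi/2\bigr]\subseteq[0,\pi/2]$. This simultaneously yields $0\le\Arg(\Lambda(\theta))\le\beta\pi/2$, places the image in the closed first quadrant, and, by strict monotonicity of $\varphi$ inside the injectivity range $[0,\pi/2]$ of the principal argument, forces $\Lambda$ to be injective, so $\Psi_{\alpha,\beta,\omega}^\gamma$ is a simple curve.

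The only mildly subtle point I anticipate is the $\beta=\alpha\gamma$ case of the limit at $\theta=0$, where the apparently indeterminate factor $(\sin\theta)^0$ must be read as $1$ and the two $\sin(\alpha\pi/2)$ powers must visibly cancel to expose the clean value $|\omega|^\gamma$; everything else is a routine consequence of the positivity of $M$ and the monotonicity of $\varphi$.
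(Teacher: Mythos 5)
Your proof is correct, and most of it (the modulus--phase decomposition, the endpoint limits, the argument bound and the first-quadrant inclusion) coincides with what the paper dispatches as ``trivial.'' The one step where you genuinely diverge is the simplicity of $\Psi_{\alpha,\beta,\omega}^\gamma$: you argue via the phase, noting that $\varphi(\theta)=\gamma\theta+(\beta-\alpha\gamma)\frac{\pi}{2}$ is strictly increasing and stays inside $\left[0,\frac{\pi}{2}\right]$, so $\Arg(\Lambda(\theta))$ is injective; the paper instead argues via the modulus, assuming $\Lambda(\theta)=\Lambda(\theta')$ with $\theta<\theta'$ and deriving from $|\Lambda(\theta)|=|\Lambda(\theta')|$ the identity
\[
\frac{\sin\left(\frac{\alpha\pi}{2}-\theta\right)}{\sin\left(\frac{\alpha\pi}{2}-\theta'\right)}=\Bigl(\frac{\sin\theta}{\sin\theta'}\Bigr)^{1-\frac{\alpha\gamma}{\beta}},
\]
whose left-hand side exceeds $1$ while the right-hand side is at most $1$ (using $\alpha\gamma\le\beta$), a contradiction. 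Your phase argument is shorter and uses only $\gamma>0$ rather than $\alpha\gamma\le\beta$; its only loose end is $\theta=0$ in the case $\beta>\alpha\gamma$, where $\Lambda(0)=0$ has no well-defined argument --- but since $M(\theta)>0$ for $\theta>0$, the value $0$ is not attained elsewhere and injectivity still holds, so this is a one-line patch rather than a gap. You are also slightly more careful than the paper in claiming $C^\infty$ only on the open interval: when $0<\frac{\beta}{\alpha}-\gamma<1$ the factor $(\sin\theta)^{\beta/\alpha-\gamma}$ is continuous but not differentiable from the right at $\theta=0$, so smoothness up to the left endpoint, as literally stated in the lemma, does not actually hold there.
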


\begin{proof}
The first part of the proof is trivial. Moreover, as inequalities \eqref{cond.complete.monot} hold and $0\leq \theta<\frac{\alpha\pi}{2}$, we have:
\[0\leq\frac{ (\beta-\alpha\gamma)\pi}{2}<\Arg(\Lambda(\theta))<\frac{\beta\pi}{2}\leq \frac{\pi}{2}.\]
Therefore, the curve $\Psi_{\alpha,\beta,\omega}^\gamma$ is included in the first quadrant of the complex plane. 

Assuming by contradiction that $\Psi_{\alpha,\beta,\omega}^\gamma$  is not simple, there exist $\theta$ and $\theta'$ such that $0\leq\theta<\theta'<\frac{\alpha\pi}{2}$ and $\Lambda(\theta)=\Lambda(\theta')$. Therefore, $|\Lambda(\theta)|=|\Lambda(\theta')|$, or equivalently: 
\[\frac{\sin(\frac{\alpha\pi}{2}-\theta)}{\sin(\frac{\alpha\pi}{2}-\theta')}=\left(\frac{\sin\theta}{\sin \theta'}\right)^{1-\frac{\alpha\gamma}{\beta}}.\]
As $0\leq\theta<\theta'<\frac{\alpha\pi}{2}$, the left hand side of this equality is larger than $1$, while the right hand side is subunitary, which is absurd. Hence, $\Psi_{\alpha,\beta,\omega}^\gamma$ is a simple curve.
\qed\end{proof}

Clearly, when $\beta-\alpha\gamma=0$, the  parametric equation of the curve given by Lemma \ref{lem.Lambda}  simplifies to
\[\Psi_{\alpha,\alpha\gamma,\omega}^\gamma:
\quad \lambda=\left(\dfrac{|\omega|\sin\frac{\alpha\pi}{2}}{\sin\left(\frac{\alpha\pi}{2}-\theta\right)}\right)^\gamma \! \eu^{\iu\gamma\theta},
\quad \theta\in \left[0,\frac{\alpha\pi}{2}\right).
\]

In what follows, $\overline{\Psi}_{\alpha,\beta,\omega}^\gamma$ denotes the complex conjugate of the curve $\Psi_{\alpha,\beta,\omega}^\gamma$ defined in Lemma \ref{lem.Lambda}, i.e.
\[\overline{\Psi}_{\alpha,\beta,\omega}^\gamma=\{\lambda\in\mathbb{C}~:~\overline{\lambda}\in \Psi_{\alpha,\beta,\omega}^\gamma\}.\]

We obtain the following result, characterising the root locus of the characteristic equation \eqref{eq.char.lambda}:

\begin{proposition}\label{prop.root.locus}
The characteristic equation \eqref{eq.char.lambda} has pure imaginary roots if and only if $\lambda\in\Psi_{\alpha,\beta,\omega}^\gamma\cup \overline{\Psi}_{\alpha,\beta,\omega}^\gamma$.
\end{proposition}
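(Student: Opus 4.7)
The plan is to reverse-engineer Lemma \ref{lem.Lambda}: substitute $s = \rho e^{\pm i\pi/2}$ with $\rho \geq 0$ into the characteristic equation $s^{\beta-\alpha\gamma}(s^\alpha-\omega)^\gamma = \lambda$ and show that as $\rho$ sweeps $[0,\infty)$ the resulting value of $\lambda$ traces out exactly $\Psi_{\alpha,\beta,\omega}^\gamma$ for the $+$ sign and $\overline{\Psi}_{\alpha,\beta,\omega}^\gamma$ for the $-$ sign. Because the parametrising map will turn out to be a bijection, both directions of the equivalence follow simultaneously.

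For the case $s = i\rho$, I would first use the standing hypothesis (\ref{cond.complete.monot}), which forces $\omega < 0$, to write $s^\alpha - \omega = \rho^\alpha e^{i\alpha\pi/2} + |\omega|$, and then read off the modulus and argument of this sum from the triangle in the complex plane with vertices $0$, $|\omega|$ and $s^\alpha - \omega$. Setting $\theta = \arg(s^\alpha - \omega) \in [0,\alpha\pi/2)$, the interior angles of that triangle are $\theta$, $\pi - \alpha\pi/2$ and $\alpha\pi/2 - \theta$, so the law of sines gives
\[
\rho^\alpha = \frac{|\omega|\sin\theta}{\sin(\alpha\pi/2-\theta)}, \qquad |s^\alpha - \omega| = \frac{|\omega|\sin(\alpha\pi/2)}{\sin(\alpha\pi/2-\theta)}.
\]
Inserting these formulas, together with $\rho^{\beta-\alpha\gamma} = (\rho^\alpha)^{\beta/\alpha-\gamma}$, into the principal-branch identity
\[
\lambda = \rho^{\beta-\alpha\gamma}\, e^{i(\beta-\alpha\gamma)\pi/2} \,|s^\alpha-\omega|^\gamma\, e^{i\gamma\theta}
\]
yields $\lambda = \Lambda(\theta)$ after routine rearrangement. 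The first displayed formula also shows that $\theta$ is a strictly increasing function of $\rho$ on $[0,\infty)$ with range $[0,\alpha\pi/2)$, so every step is reversible: starting from $\lambda = \Lambda(\theta) \in \Psi_{\alpha,\beta,\omega}^\gamma$ one recovers a unique $\rho \geq 0$ and hence a purely imaginary root $s = i\rho$ of (\ref{eq.char.lambda}).

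The case $s = -i\rho$ is obtained by complex conjugation (all other parameters being real), producing $\lambda = \overline{\Lambda(\theta)} \in \overline{\Psi}_{\alpha,\beta,\omega}^\gamma$. The main point requiring care is the handling of the principal branches along the imaginary axis, in particular at the two endpoints of the parametrisation: at $\rho = 0$ one must separately match the sub-cases $\beta > \alpha\gamma$ (giving $\lambda = 0$) and $\beta = \alpha\gamma$ (giving $\lambda = |\omega|^\gamma$) against the limits $\Lambda(0)$ recorded in Lemma \ref{lem.Lambda}, while $\rho \to \infty$ must be verified to correspond to $\theta \to \alpha\pi/2$. Beyond this bookkeeping the argument is elementary trigonometry, so I do not anticipate any real obstacle.
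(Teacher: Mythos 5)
Your proposal is correct and follows essentially the same route as the paper: the paper also parametrises by $\theta=\arg(s^\alpha-\omega)\in[0,\alpha\pi/2)$, derives the identical pair of formulas for $\mu^\alpha$ and $|s^\alpha-\omega|$ by taking real and imaginary parts of $(\iu\mu)^\alpha-\omega=\rho\eu^{\iu\theta}$ (your law-of-sines triangle is the same computation), substitutes into the principal-branch expression to get $\lambda=\Lambda(\theta)$, and handles $s=-\iu\mu$ by conjugation. Your explicit remark that $\theta\mapsto\rho$ is a bijection of $[0,\alpha\pi/2)$ onto $[0,\infty)$ makes the reversibility of the equivalence slightly more explicit than the paper's phrasing, but it is not a different argument.
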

    
\begin{proof}
Assuming that the equation \eqref{eq.char.lambda} has a root $s=i\mu$, with $\mu\geq 0$, let us consider $\rho>0$ and $\theta\in(-\pi,\pi]$ such that 
\[(\iu \mu)^\alpha-\omega=\rho \eu^{\iu\theta}.\]
Hence, equation \eqref{eq.char.lambda} has a pure imaginary root if and only if there exist $\mu\geq 0$ and $\rho>0$ and $\theta\in(-\pi,\pi]$ such that \begin{equation}\label{sys.pure.imaginary}
\begin{cases}
(\iu \mu)^{\beta-\alpha\gamma}(\rho \eu^{\iu\theta})^\gamma=\lambda\\
(\iu\mu)^\alpha-\omega=\rho \eu^{\iu\theta}
\end{cases}    
\end{equation}
Taking the real and imaginary parts in the second equation of system \eqref{sys.pure.imaginary}, it follows that 
\[
\begin{cases}
\mu^\alpha \cos\frac{\alpha\pi}{2}=\rho\cos\theta+\omega\\
\mu^\alpha \sin\frac{\alpha\pi}{2}=\rho\sin\theta
\end{cases}    
\]
and hence:
\begin{equation}\label{eq.mu.rho}
\mu^\alpha = \dfrac{|\omega|\sin\theta}{\sin\left(\frac{\alpha\pi}{2}-\theta\right)}\quad\text{and}\quad
\rho =\dfrac{|\omega|\sin\frac{\alpha\pi}{2}}{\sin\left(\frac{\alpha\pi}{2}-\theta\right)}.
\end{equation}
It is obvious that since $\mu\geq 0$ and $\rho>0$, the following inequalities must be satisfied:
\[\sin\theta\geq 0\quad\text{and}\quad \sin\left(\frac{\alpha\pi}{2}-\theta\right)>0.\]
which is equivalent to $\theta\in \left[0,\frac{\alpha\pi}{2}\right)$.   

Replacing $\mu$ and $\rho$ given by \eqref{eq.mu.rho} into the first equation of system \eqref{sys.pure.imaginary}, we deduce that $\lambda\in \Psi_{\alpha,\beta,\omega}^\gamma$. In a similar way, assuming that equation \eqref{eq.char.lambda} has a root $s=-\iu \mu$, with $\mu\geq 0$, it follows that $\lambda\in \overline{\Psi}_{\alpha,\beta,\omega}^\gamma$. 
\qed\end{proof}

Let us denote by $N(\alpha,\beta,\gamma,\omega,\lambda)$ the number of unstable roots ($\Re(s)\geq 0$) of the characteristic equation \eqref{eq.char.lambda}, including their multiplicities. The following lemma shows that the function $N(\alpha,\beta,\gamma,\omega,\lambda)$ is well-defined. Moreover, some important properties are also established, which are needed for the proof of the main results. 

\begin{lemma}\label{lemma.number.roots} Let $\lambda\in\mathbb{C}$ and $\alpha,\beta,\gamma,\omega$ satisfy inequalities \eqref{cond.complete.monot}. The following statements hold:
\begin{itemize}
\item[i.] The characteristic function equation \eqref{eq.char.lambda} has at most a finite number of roots satisfying $\Re(s)\geq 0$. 

\item[ii.]  The function $\lambda\mapsto N(\alpha,\beta,\gamma,\omega,\lambda)$ is continuous at each $\lambda\notin \Psi_{\alpha,\beta,\omega}^\gamma \cup \overline{\Psi}_{\alpha,\beta,\omega}^\gamma$, and hence, $N(\alpha,\beta,\gamma,\omega,\lambda)$ is constant on each connected component of the set $\mathbb{C}\setminus (\Psi_{\alpha,\beta,\omega}^\gamma \cup \overline{\Psi}_{\alpha,\beta,\omega}^\gamma)$.
\end{itemize}
\end{lemma}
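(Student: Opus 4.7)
The plan is to treat the characteristic equation as a fixed holomorphic equation in $s$ with $\lambda$ playing the role of a parameter, and then apply standard complex-analytic tools (Weierstrass preparation / Rouché) to control the zeros.

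For part (i), I would set
\[
H_{\lambda}(s) = s^{\beta-\alpha\gamma}(s^{\alpha}-\omega)^{\gamma}-\lambda .
\]
Since $\omega<0$ and $0<\alpha\le 1$, for $s$ in the closed right half-plane one has $\arg(s^{\alpha})\in[-\alpha\pi/2,\alpha\pi/2]\subset(-\pi/2,\pi/2)$, so $s^{\alpha}-\omega$ stays in the open right half-plane and $(s^{\alpha}-\omega)^{\gamma}$ is well-defined and holomorphic via the principal branch in a neighbourhood of $\{\Re(s)\ge 0\}$; together with the factor $s^{\beta-\alpha\gamma}$, which is holomorphic on $\mathbb{C}\setminus(-\infty,0]$ (and continuous at $s=0$ because $\beta-\alpha\gamma\ge 0$), this makes $H_{\lambda}$ holomorphic in an open neighbourhood of the closed right half-plane. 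Next I would use the asymptotic bound $|H_{\lambda}(s)|\ge c|s|^{\beta}-|\lambda|$, valid for $|s|$ large on this neighbourhood, to conclude that all roots with $\Re(s)\ge 0$ lie in a compact disk. Since $H_{\lambda}\not\equiv 0$ and is holomorphic, its zero set is discrete; a discrete set inside a compact set is finite, giving (i).

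For part (ii), fix $\lambda_{0}\notin\Psi_{\alpha,\beta,\omega}^{\gamma}\cup\overline{\Psi}_{\alpha,\beta,\omega}^{\gamma}$. By Proposition \ref{prop.root.locus}, $H_{\lambda_{0}}$ has no zeros on the imaginary axis, and by the argument above only finitely many zeros in the open right half-plane, all contained in some disk of radius $R_{0}$. I would pick $R>R_{0}$ large enough that the uniform asymptotic lower bound guarantees $|H_{\lambda}(s)|\ge 1$ on $\{|s|=R,\ \Re(s)\ge 0\}$ for every $\lambda$ in a bounded neighbourhood of $\lambda_{0}$, and form the boundary $\Gamma_{R}$ of the closed half-disk $D_{R}=\{s:\Re(s)\ge 0,\ |s|\le R\}$. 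Since $H_{\lambda_{0}}$ is continuous and non-vanishing on the compact set $\Gamma_{R}$, there exists $m>0$ with $|H_{\lambda_{0}}(s)|\ge m$ on $\Gamma_{R}$. Then for every $\lambda$ with $|\lambda-\lambda_{0}|<m$,
\[
|H_{\lambda}(s)-H_{\lambda_{0}}(s)|=|\lambda-\lambda_{0}|<m\le|H_{\lambda_{0}}(s)|\qquad\text{on }\Gamma_{R},
\]
so by Rouché's theorem $H_{\lambda}$ and $H_{\lambda_{0}}$ have the same number of zeros (counted with multiplicity) inside $D_{R}$. Combined with the choice of $R$, which ensures no zero with $\Re(s)\ge 0$ escapes outside $D_{R}$, this gives $N(\alpha,\beta,\gamma,\omega,\lambda)=N(\alpha,\beta,\gamma,\omega,\lambda_{0})$ on a neighbourhood of $\lambda_{0}$. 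Hence $N$ is locally constant on $\mathbb{C}\setminus(\Psi_{\alpha,\beta,\omega}^{\gamma}\cup\overline{\Psi}_{\alpha,\beta,\omega}^{\gamma})$, and being integer-valued it is constant on each connected component.

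The main delicate step I expect is the justification that the contour $\Gamma_{R}$ is legitimate for Rouché, namely confirming that $H_{\lambda}$ is holomorphic up to and across the imaginary axis (the branch-cut bookkeeping above) and that no zeros can leak in through $\{|s|=R\}$ under small perturbations of $\lambda$; these are precisely what the sector analysis for $s^{\alpha}-\omega$ and the uniform lower bound $|H_{\lambda}(s)|\ge c|s|^{\beta}-|\lambda|$ are designed to handle.
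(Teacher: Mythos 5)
Your plan is correct and follows essentially the same route as the paper: bound the unstable roots in a compact half-disk via a lower bound on $|s^{\beta-\alpha\gamma}(s^\alpha-\omega)^\gamma|$ for $\Re(s)\ge 0$ (the paper derives this exactly as $|s^\alpha-\omega|\ge|s|^\alpha$, giving $|s|\le|\lambda|^{1/\beta}$), deduce finiteness from the discreteness of zeros of a non-trivial holomorphic function (the paper phrases this as Bolzano--Weierstrass plus the principle of permanence), and prove local constancy of $N$ by Rouch\'e's theorem on the boundary of that half-disk with the perturbation $|\lambda-\lambda_0|$ controlled by the minimum of $|\Delta(\,\cdot\,;\lambda_0)|$ on the contour. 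The only cosmetic difference is that you state the large-$|s|$ lower bound asymptotically with an unspecified constant $c$, where the paper's exact inequality (with $c=1$) is what makes the radius of the half-disk explicit.
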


\begin{proof} Let us denote \[\Delta(s;\alpha,\beta,\gamma,\omega,\lambda)=s^{\beta-\alpha\gamma}(s^\alpha-\omega)^\gamma-\lambda.\] 

We will first show that the set of unstable roots of the equation \eqref{eq.char.lambda} is bounded. Indeed, if $s$ is a root of \eqref{eq.char.lambda}  such that $\Re(s)\geq 0$, as $\alpha\in(0,1]$, it follows that $\Re(s^\alpha)\geq 0$. Moreover, as $\omega<0$, we have: \[|s^\alpha-\omega|=\sqrt{|s|^{2\alpha}+\omega^2-2\omega\Re(s^\alpha)}\geq |s|^{\alpha}.\]
Therefore:
\[|\lambda|=|s|^{\beta-\alpha\gamma}|s^\alpha-\omega|^\gamma\geq |s|^{\beta-\alpha\gamma}|s|^{\alpha\gamma}=|s|^\beta,\]
and therefore, $|s|\leq |\lambda|^{\frac{1}{\beta}}$.

\noindent\textit{Proof of statement (i).} 
Let us first consider $\lambda\neq 0$. Assuming that the characteristic equation \eqref{eq.char.lambda} has an infinite number of unstable roots, the Bolzano-Weierstrass theorem implies that there exists a convergent sequence of unstable roots $(s_n)$ with the limit $s_0\neq 0$, such that $\Re(s_0)\geq 0$. Since the function $\Delta(s;\alpha,\beta,\gamma,\omega,\lambda)$ is analytic in $\mathbb{C}\setminus\mathbb{R}_-$, the principle of permanence implies that it is identically zero, which is absurd. Hence, the function $N(\alpha,\beta,\gamma,\omega,\lambda)$ is finite and well-defined. 

If $\lambda=0$, the number of unstable roots of \eqref{sys.Prabhakar} is finite  because the equation $s^\alpha-\omega=0$ has a finite number of unstable roots. This can be shown in a similar way as above, by a simple application of the principle of permanence. 

\noindent\textit{Proof of statement (ii).}  Let $\lambda_0\in \mathbb{C}\setminus (\Psi_{\alpha,\beta,\omega}^\gamma \cup \overline{\Psi}_{\alpha,\beta,\omega}^\gamma)$ and $r>0$ such that the open neighborhood $B_r(\lambda_0)=\{\lambda\in\mathbb{C}~:~|\lambda-\lambda_0|<r\}$ is included in the set $\mathbb{C}\setminus (\Psi_{\alpha,\beta,\omega}^\gamma \cup \overline{\Psi}_{\alpha,\beta,\omega}^\gamma)$.

For any $\lambda\in B_r(\lambda_0)$ we have that  
$|\lambda|<r+|\lambda_0|$, and hence, based on the first part of the proof, any unstable root of $\Delta(s;\alpha,\beta,\gamma,\omega,\lambda)$ satisfies: 
\[|s|< (r+|\lambda_0|)^{\frac{1}{\beta}}.\]

Let us denote by $(c)$  the simple closed curve, oriented counterclockwise, bounding the open half-disk
$$D=\{s\in\mathbb{C}~:~\Re(s)> 0,~ 0<|s|<(r+|\lambda_0|)^{\frac{1}{\beta}}\}.$$
By the above construction and Proposition \ref{prop.root.locus}, it is clear that for any $\lambda\in B_r(\lambda_0)$, all unstable roots of the characteristic function $\Delta(s;\alpha,\beta,\gamma,\omega,\lambda)$ belong to $D$.

As $\Delta(s;\alpha,\beta,\gamma,\omega,\lambda_0)\neq 0$ for any $s\in(c)$, it is easy to see that \[m_0=\min\limits_{s\in(c)}|\Delta(s;\alpha,\beta,\gamma,\omega,\lambda_0)|>0.\]

Considering $r'=\min\{m_0,r\}$ it follows that for any $s\in (c)$ and for any $\lambda\in B_{r'}(\lambda_0)\subset B_r(\lambda_0)$, we have:
\begin{align*}|&\Delta(s;\alpha,\beta,\gamma,\omega,\lambda)-\Delta(s;\alpha,\beta,\gamma,\omega,\lambda_0)|=\\
&=|\lambda-\lambda_0|<r'\leq m_0\leq |\Delta(s;\alpha,\beta,\gamma,\omega,\lambda_0)|.
\end{align*}
By Rouch\'{e}'s theorem, it follows that  $\Delta(s;\alpha,\beta,\gamma,\omega,\lambda_0)$ and $\Delta(s;\alpha,\beta,\gamma,\omega,\lambda)$ have the same number of zeros in the half-disk $D$, 
and hence $$N(\alpha,\beta,\gamma,\omega,\lambda)=N(\alpha,\beta,\gamma,\omega,\lambda_0)\quad,~\forall~ \lambda\in B_{r'}(\lambda_0).$$ Hence, the function
$\lambda\mapsto N(\alpha,\beta,\gamma,\omega,\lambda)$ is continuous on $ \mathbb{C}\setminus (\Psi_{\alpha,\beta,\omega}^\gamma \cup \overline{\Psi}_{\alpha,\beta,\omega}^\gamma)$, and from the fact that it is integer-valued, we deduce that it is constant on each connected component of $ \mathbb{C}\setminus (\Psi_{\alpha,\beta,\omega}^\gamma \cup \overline{\Psi}_{\alpha,\beta,\omega}^\gamma)$.
\qed\end{proof}

We now give the main result which characterises the stability region $S_{\alpha,\beta,\omega}^\gamma$ of system \eqref{sys.Prabhakar}.

\begin{theorem}\label{thm.stab.region}
The stability region $S_{\alpha,\beta,\omega}^\gamma$ of system \eqref{sys.Prabhakar} is the region of the complex plane which includes $\mathbb{C}_-=\{\lambda\in\mathbb{C}~:\Re(\lambda)<0\}$ and is bounded by $\Psi_{\alpha,\beta,\omega}^\gamma$ and its complex conjugate $\overline{\Psi}_{\alpha,\beta,\omega}^\gamma$.
\end{theorem}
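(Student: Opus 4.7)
The plan combines three pieces: a direct argument-inequality showing $\mathbb{C}_- \subseteq S_{\alpha,\beta,\omega}^\gamma$; a topological identification of the connected components of $\mathbb{C} \setminus (\Psi_{\alpha,\beta,\omega}^\gamma \cup \overline{\Psi}_{\alpha,\beta,\omega}^\gamma)$; and the constancy of $N$ supplied by Lemma \ref{lemma.number.roots}.

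First I would argue $\mathbb{C}_- \subseteq S_{\alpha,\beta,\omega}^\gamma$ directly, by bounding the argument of $s^{\beta-\alpha\gamma}(s^\alpha-\omega)^\gamma$ for $s$ in the closed right half-plane under the principal branches. Any such $s$ satisfies $|\arg s|\leq \pi/2$, so $|\arg(s^{\beta-\alpha\gamma})|\leq (\beta-\alpha\gamma)\pi/2$; moreover, since $\omega<0$ and $\Re(s^\alpha)\geq 0$, we have $s^\alpha-\omega=s^\alpha+|\omega|$ with strictly positive real part, whence $|\arg(s^\alpha-\omega)|<\alpha\pi/2$ (with value $0$ at $s=0$). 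Combining,
\[
\bigl|\arg\bigl(s^{\beta-\alpha\gamma}(s^\alpha-\omega)^\gamma\bigr)\bigr| < \tfrac{\beta\pi}{2}\leq \tfrac{\pi}{2},
\]
so the left-hand side of \eqref{eq.char.lambda} has non-negative real part (indeed, $|\omega|^\gamma>0$ at $s=0$), which rules out equality with any $\lambda$ satisfying $\Re(\lambda)<0$.

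Next I would unravel the topology of $\mathbb{C}\setminus(\Psi_{\alpha,\beta,\omega}^\gamma \cup \overline{\Psi}_{\alpha,\beta,\omega}^\gamma)$. By Lemma \ref{lem.Lambda}, $\Psi_{\alpha,\beta,\omega}^\gamma$ is a simple arc in the closed first quadrant joining a starting point on the non-negative real axis (either $0$ if $\beta>\alpha\gamma$ or $|\omega|^\gamma$ if $\beta=\alpha\gamma$) to infinity in the direction $\Arg=\beta\pi/2$; its complex conjugate $\overline{\Psi}_{\alpha,\beta,\omega}^\gamma$ is the mirror image in the fourth quadrant, and the two arcs meet only at their common starting endpoint. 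Passing to the Riemann sphere, $\Psi_{\alpha,\beta,\omega}^\gamma \cup \overline{\Psi}_{\alpha,\beta,\omega}^\gamma \cup\{\infty\}$ becomes a Jordan curve and therefore separates $\mathbb{C}$ into exactly two open connected components: a region $D_-$ containing $\mathbb{C}_-$, and a second region $D_+$ that contains the portion of the positive real axis beyond the starting point.

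To conclude, I would invoke Lemma \ref{lemma.number.roots}(ii), which makes $N(\alpha,\beta,\gamma,\omega,\cdot)$ constant on each component. The first step gives $N\equiv 0$ on $\mathbb{C}_-\subseteq D_-$, and hence $D_-\subseteq S_{\alpha,\beta,\omega}^\gamma$. For the reverse inclusion I would test a point $\lambda=R>\max\{0,|\omega|^\gamma\}$ in $D_+$: the real-valued map $x\mapsto x^{\beta-\alpha\gamma}(x^\alpha+|\omega|)^\gamma$ is continuous and increases from $0$ (or $|\omega|^\gamma$) to $+\infty$ on $[0,\infty)$, so it attains $R$ at some $x>0$, producing a positive real root of \eqref{eq.char.lambda} and forcing $N\geq 1$ throughout $D_+$. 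Therefore $D_+ \cap S_{\alpha,\beta,\omega}^\gamma = \emptyset$ and $S_{\alpha,\beta,\omega}^\gamma = D_-$. The main obstacle I anticipate is keeping the argument count of the first step watertight across the degenerate cases ($s=0$, $s$ on the imaginary axis, and $\beta=\alpha\gamma$ versus $\beta>\alpha\gamma$); the Jordan-curve separation on the sphere and the test-point verification in $D_+$ are essentially routine once the geometry of $\Psi_{\alpha,\beta,\omega}^\gamma$ furnished by Lemma \ref{lem.Lambda} is in hand.
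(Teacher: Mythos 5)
Your proposal is correct, and it reaches the conclusion by a genuinely different route in its second half. The first half coincides with the paper's: an argument-count showing that $s^{\beta-\alpha\gamma}(s^\alpha-\omega)^\gamma$ has argument of modulus at most $\beta\pi/2\leq\pi/2$ on the closed right half-plane (the paper runs this only for $\lambda\in(-\infty,0)$ and then invokes the constancy of $N$ on $D_-$; you run it for all of $\mathbb{C}_-$, which is equivalent), together with the separation of the plane by $\Psi_{\alpha,\beta,\omega}^\gamma\cup\overline{\Psi}_{\alpha,\beta,\omega}^\gamma$ and Lemma \ref{lemma.number.roots}. Where you diverge is in proving $D_+\cap S_{\alpha,\beta,\omega}^\gamma=\emptyset$: the paper establishes a transversality condition, computing $F'(s)$ at a purely imaginary root, matching it against $\Lambda'(\theta)$, and showing that $\nabla\Re(s)$ is a normal to $\Psi_{\alpha,\beta,\omega}^\gamma$ pointing into $D_+$, so that a root crosses into the right half-plane as $\lambda$ crosses the boundary. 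You instead exhibit an explicit unstable root at a single test point: for real $\lambda=R$ large enough, the strictly increasing map $x\mapsto x^{\beta-\alpha\gamma}(x^\alpha+|\omega|)^\gamma$ attains $R$ at some $x>0$ by the intermediate value theorem, forcing $N(R)\geq 1$; since $N\equiv 0$ on $D_-$, this places $R$ in $D_+$ and propagates $N\geq 1$ over all of $D_+$ by constancy. Your argument is more elementary and sidesteps the orientation bookkeeping of the normal-vector computation, and it neatly avoids having to decide a priori which component the positive real axis lies in. What it does not deliver is the transversality information itself, which the paper uses beyond this theorem: it is invoked in Section \ref{S:Solution} to argue that ${\mathcal H}(s)$ has a single singularity, and in the remarks and the Brusselator example to count crossing roots and to interpret the boundary crossing as a Hopf-type bifurcation. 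So as a proof of Theorem \ref{thm.stab.region} your route is complete and arguably cleaner, but the paper's extra work is not wasted elsewhere in the text.
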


\begin{proof}
Lemma \ref{lem.Lambda} implies that $\Psi_{\alpha,\beta,\omega}^\gamma \cup \overline{\Psi}_{\alpha,\beta,\omega}^\gamma$ partition the complex plane into two disjoint regions, which will be denoted by $D_-$ and $D_+$. As $\Psi_{\alpha,\beta,\omega}^\gamma$ and $\overline{\Psi}_{\alpha,\beta,\omega}^\gamma$ are included, respectively, in the first and fourth quadrant of complex plane, one of these regions includes $\mathbb{C}_-$ (we will further assume that $\mathbb{C}_-\subset D_-$).  Moreover, based on Lemma \ref{lemma.number.roots},  these regions have the property that, for every $\lambda$ within a given region, the number of unstable roots of the characteristic equation \eqref{eq.char.lambda} is constant.

In what follows, we will show that if $\lambda\in(-\infty,0)$, the characteristic equation \eqref{eq.char.lambda} does not have any roots with positive real part. Indeed, let us assume by contradiction that there exists $s\in\mathbb{C}$, $\Re(s)\geq 0$ such that 
\[s^{\beta-\alpha\gamma}(s^\alpha-\omega)^\gamma=\lambda.\]
As both $s$ and $\overline{s}$ are roots of the above equation, we may further assume that $\Arg(s)\in\left[0,\frac{\pi}{2}\right]$. 

On one hand, we have: 
\begin{equation}\label{eq.arg}
    \Arg\left[s^{\beta-\alpha\gamma}(s^\alpha-\omega)^\gamma\right]=\Arg(\lambda)=\pi.
\end{equation}

On the other hand, as $\beta-\alpha\gamma\in[0,1)$, we have:
\begin{align*}\Arg(s^{\beta-\alpha\gamma})\!&=(\beta-\alpha\gamma)\Arg(s)+2\pi\left\lfloor \frac{\pi-(\beta-\alpha\gamma)\Arg(s)}{2\pi} \right\rfloor\\
&=(\beta-\alpha\gamma)\Arg(s)\in \left[0,\frac{\pi}{2}\right].\end{align*}
Moreover, as $\omega<0$ and $\alpha\in(0,1]$, we deduce that $\Arg(s^\alpha)\in\left[0,\frac{\pi}{2}\right]$ and:
\[0<\Arg(s^\alpha-\omega)<\Arg(s^\alpha)=\alpha\Arg(s)\leq\frac{\pi}{2}\]
and hence, as $0<\alpha\gamma\leq 1$, it follows that: 
\[0<\gamma\Arg(s^\alpha-\omega)<\alpha\gamma \Arg(s)\leq \frac{\pi}{2}.\]
Therefore: 
\begin{align*}\Arg\left((s^\alpha-\omega)^\gamma\right)&=\gamma\Arg(s^\alpha-\omega)+2\pi\left\lfloor \frac{\pi-\gamma\Arg(s^\alpha-\omega)}{2\pi} \right\rfloor\\
&=\gamma\Arg(s^\alpha-\omega)\in \left[0,\frac{\pi}{2}\right].\end{align*}
Finally, combining the previous results and taking into account that $\beta\leq 1$, we get:
\begin{align*}
0\leq \Arg&\left[s^{\beta-\alpha\gamma}(s^\alpha-\omega)^\gamma\right]\\
&=\Arg(s^{\beta-\alpha\gamma})+ \Arg\left((s^\alpha-\omega)^\gamma\right)\\
&=(\beta-\alpha\gamma)\Arg(s)+\gamma\Arg(s^\alpha-\omega)\\
&< (\beta-\alpha\gamma)\Arg(s)+\alpha\gamma\Arg(s)\\
&=\beta\Arg(s)\\
&\leq \frac{\pi}{2}.
\end{align*}
which is in contradiction with \eqref{eq.arg}. Therefore, all the roots of the characteristic equation \eqref{eq.char.lambda} are in the left half plane, whenever $\lambda\in(-\infty,0)$. Hence, based on Lemma \ref{lemma.number.roots}, we obtain that $N(\alpha,\beta,\gamma,\omega,\lambda)=0$ for any $\lambda\in D_-$,  implying that $D_-\subset S_{\alpha,\beta,\omega}^\gamma$.

\noindent\textit{Proof of the transversality condition.}

Let us denote by $s(\lambda)$ the unique root of the characteristic equation \eqref{eq.char.lambda} such that $\Re\left[s(\lambda^\star)\right]=0$ when $\lambda^\star\in \Psi_{\alpha,\beta,\omega}^\gamma$. Let $\theta\in\left(0,\frac{\alpha\pi}{2}\right)$ such that $\lambda^\star=\Lambda(\theta)$, and hence, based on the proof of Proposition \ref{prop.root.locus}, we have: 
\[s(\lambda^\star)=\iu\mu= \iu\left(\dfrac{|\omega|\sin\theta}{\sin\left(\frac{\alpha\pi}{2}-\theta\right)}\right)^{\frac{1}{\alpha}}.\]
Let us denote $F(s)=s^{\beta-\alpha\gamma}(s^\alpha-\omega)^\gamma$. By the implicit function theorem, as $F(s(\lambda))=\lambda$,  we have: 
\[\frac{\partial s}{\partial \Re(\lambda)}=\frac{1}{F'(s)}\quad\text{and}\quad\frac{\partial s}{\partial \Im(\lambda)}=\frac{\iu}{F'(s)}\]
and hence:
\[\frac{\partial \Re(s)}{\partial \Re(\lambda)}=\frac{\Re(F'(s))}{|F'(s)|^2}\quad\text{and}\quad\frac{\partial \Re(s)}{\partial \Im(\lambda)}=\frac{\Im(F'(s))}{|F'(s)|^2},\]
which leads to:
\[\nabla\Re(s):=\frac{\partial \Re(s)}{\partial \Re(\lambda)}+ \iu \frac{\partial \Re(s)}{\partial \Im(\lambda)}=\frac{F'(s)}{|F'(s)|^2}.\]
A simple computation shows that: 
\[F'(s)=s^{-1}\left[\beta+\alpha\gamma\omega (s^\alpha-\omega)^{-1}\right]F(s),\]
which gives:
\[
    \begin{aligned}
F'(s)\vert_{\lambda=\lambda^\star}&=F'(\iu\mu) \\
&=-\iu \mu^{-1}\left[\beta+\alpha\gamma\omega ((i\mu)^\alpha-\omega)^{-1}\right]\lambda^\star. \        
    \end{aligned}
\]
Finally, taking into account that $\lambda^\star=\Lambda(\theta)$ and expressing $(\iu \mu)^\alpha-\omega$ in terms of $\theta$, it follows that: 
\[F'(s)\vert_{\lambda=\lambda^\star}=-\iu \mu^{-1}\left[\beta-\alpha\gamma \dfrac{\sin\left(\frac{\alpha\pi}{2}-\theta\right)}{\sin\frac{\alpha\pi}{2}}\eu^{-\iu\theta}\right]\Lambda(\theta).\] 
On the other hand, a straightforward computation gives: 
\[\Lambda'(\theta)\!=\!\dfrac{\sin\frac{\alpha\pi}{2}}{\alpha\sin\theta\sin\left(\frac{\alpha\pi}{2}-\theta\right)}\!\left[\beta\!-\!\alpha\gamma \dfrac{\sin\left(\frac{\alpha\pi}{2}-\theta\right)}{\sin\frac{\alpha\pi}{2}}\eu^{-\iu\theta}\!\right]\!\Lambda(\theta)\]
and therefore:
\[F'(s)\vert_{\lambda=\lambda^\star}=-\iu \mu^{-1}\dfrac{\alpha\sin\theta\sin\left(\frac{\alpha\pi}{2}-\theta\right)}{\sin\frac{\alpha\pi}{2}}\Lambda'(\theta).\] 

Hence, exploiting the $\mathbb{R}^2$
vector space structure which underlies $\mathbb{C}$ and considering that the parametrization of the curve $\Psi_{\alpha,\beta,\omega}^\gamma$ is fixed in the direction of increasing $\theta$, it follows that the gradient vector $\nabla \Re(s)(\lambda^\star)$ is in fact a right-pointing normal vector to the curve  $\Psi_{\alpha,\beta,\omega}^\gamma$, pointing towards the region $D_+$. We deduce that as the parameter $\lambda$ crosses the curve $\Psi_{\alpha,\beta,\omega}^\gamma$ from the region $D_-$ into the region $D_+$, $\Re(s(\lambda))$ becomes positive, which ensures that the transversality condition holds. 

Moreover, this shows that if $\lambda\in D_+$, the characteristic equation \eqref{eq.char.lambda} has at least one root with positive real part, and hence, we finally obtain that $S_{\alpha,\beta,\omega}^\gamma=D_-$, which completes the proof.
\qed\end{proof}

It is important to emphasize that Theorem \ref{thm.stab.region} gives a complete characterisation of the stability region $S_{\alpha,\beta,\omega}^\gamma$ of system \eqref{sys.Prabhakar}. Some examples are shown in Figures  \ref{fig:SR.increasing.gamma} and \ref{fig:SR.decreasing.omega}. 

\begin{remark}
Based on Lemma \ref{lem.Lambda}, it is clear that when $\gamma\rightarrow 0$ (i.e. when the Prabhakar derivative in \eqref{sys.Prabhakar} reduces to the standard Caputo derivative of order $\beta$), the curve $\Psi^\gamma_{\alpha,\beta,\omega}$ approaches the half line $\Arg(\lambda)=\frac{\beta\pi}{2}$ of the complex plane, and hence the stability region is indeed 
\[S_{\alpha,\beta,\omega}^0=\{\lambda\in\mathbb{C}~:~|\Arg(\lambda)|>\frac{\beta\pi}{2}\},\]
which is in accordance with Matignon's theorem \cite{Matignon}. Hence, Theorem \ref{thm.stab.region} is a generalization of Matignon's theorem for the case of systems of fractional differential equations with Prabhakar derivatives. 
\end{remark}

\begin{remark}
The transversality condition which was verified in the proof of Theorem \ref{thm.stab.region} ensures that if $\lambda\in\mathbb{C}\setminus\{0\}$ is a simple eigenvalue of the matrix $A$ of the linear system \eqref{sys.Prabhakar}, when the parameters $(\alpha,\beta,\gamma,\omega)$ of the Prabhakar derivative are varied and $\lambda$ crosses from $S_{\alpha,\beta,\omega}^\gamma$ to its open complementary $\text{int}(\mathbb{C}\setminus S_{\alpha,\beta,\omega}^\gamma)$, exactly one root of the characteristic equation \eqref{eq.char} crosses the imaginary axis from the left half-plane to the right half-plane of $\mathbb{C}$. More generally, based on a similar argument, we can express the number of unstable roots ($\Re(s)\geq 0$) of the characteristic equation \eqref{eq.char} as follows: 
\[
N(\alpha,\beta,\gamma,\omega)=\!\!\sum_{\lambda\in\sigma(A)}\!\!N(\alpha,\beta,\gamma,\omega,\lambda)=\!\!\!\!\sum_{\lambda\in \mathbb{C}\setminus S_{\alpha,\beta,\omega}^\gamma} \!\!\!\! m(\lambda),
\]
where $m(\lambda)$ denotes the algebraic multiplicity of the eigenvalue $\lambda$. 
\end{remark}

\begin{figure*}[htbp]
    \centering
    \includegraphics[width=\linewidth]{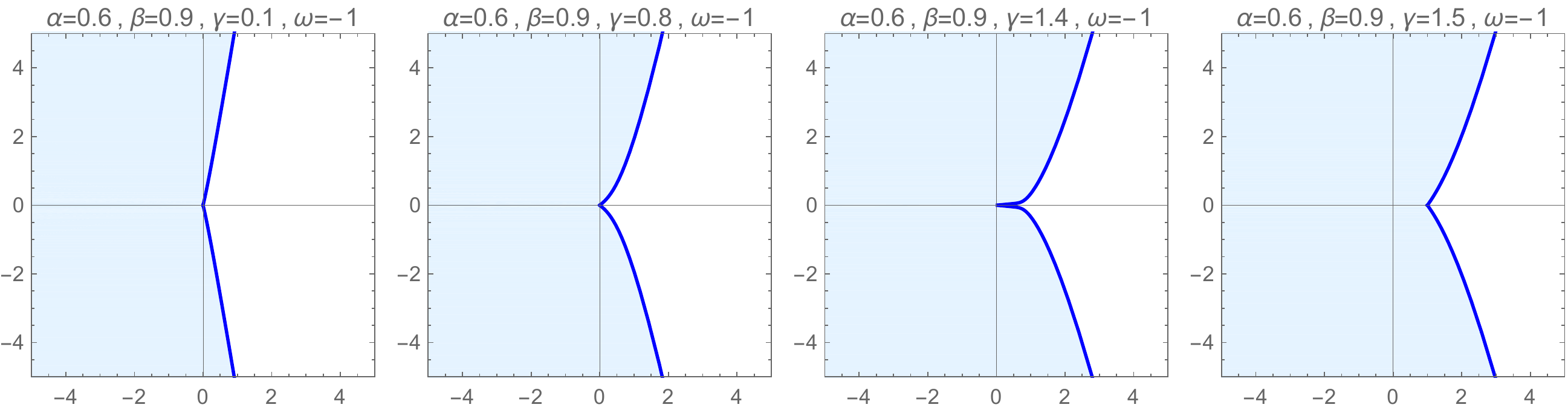}
    \caption{Stability region $S_{\alpha,\beta,\omega}^\gamma$ for fixed values of $\alpha, \beta, \omega$ and increasing values of $\gamma$. Last figure is for the special case $\beta-\alpha\gamma=0$.}
    \label{fig:SR.increasing.gamma}
\end{figure*}

\begin{figure*}[htbp]
    \centering
    \includegraphics[width=\linewidth]{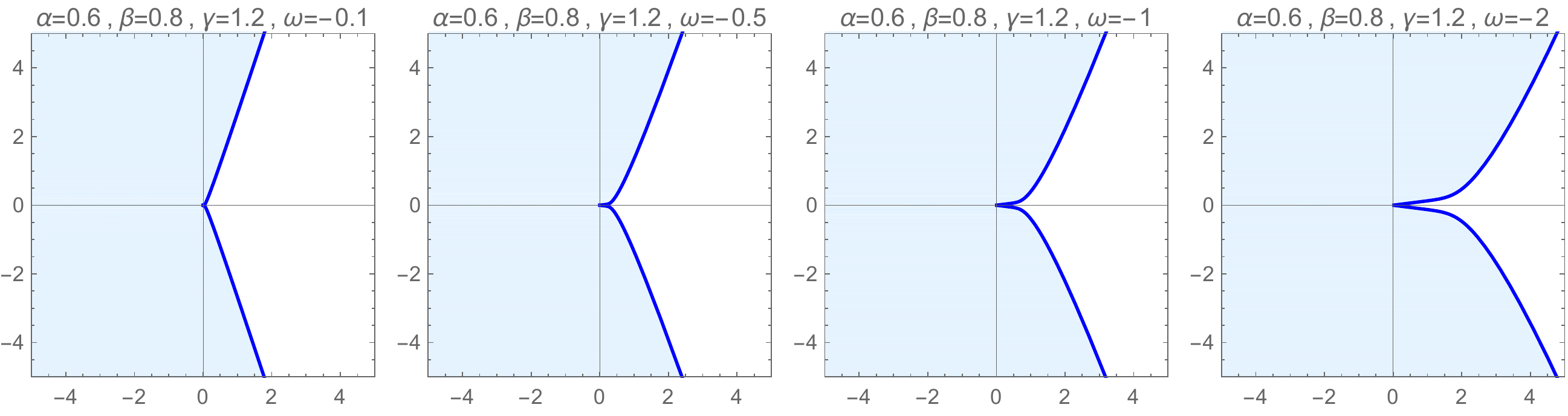}
    \caption{Stability region $S_{\alpha,\beta,\omega}^\gamma$ for fixed values of $\alpha, \beta, \gamma$ and decreasing values of $\omega$.}
    \label{fig:SR.decreasing.omega}
\end{figure*}

\section{Solution of linear and nonlinear FDEs of Prabhakar type}\label{S:Solution}

Analytical solutions of (\ref{sys.Prabhakar}) are not available in a simple and closed form. With the purpose of verifying the theoretical findings on  stability regions, we derive here asymptotic representations of the exact solution  for small and large arguments, together with a numerical method for solving more general nonlinear problems with the fractional Prabhakar derivative. 

For convenience we derive asymptotic expansion just for the scalar case $y(t):[0,T]\to \Cset$ and $A\in \Cset$; the generalization to the vector case is however straightforward. 

\subsection{Asymptotic expansion for small arguments}

By means of the LT we can rewrite (\ref{sys.Prabhakar}) in the LT domain as
\[
    s^{\beta-\alpha\gamma-1}\bigl(s^{\alpha}-\omega\bigr)^{\gamma} \bigl(s \hat{y}(s)  - y_0 \bigr) = A \hat{y}(s) ,
\]  
with $\hat{y}(s)$ the LT of $y(t)$. Therefore the solution in the LT domain is $\hat{y}(s) = {\cal H}(s) y_0$, where
\begin{equation}\label{eq:LinSis_Sol_LT}
    {\cal H}(s) = \frac{s^{\beta-\alpha\gamma-1}(s^{\alpha}-\omega)^{\gamma} }{s^{\beta-\alpha\gamma}(s^{\alpha}-\omega)^{\gamma} - A}.
\end{equation}

Observe now that 
\[
    {\cal H}(s) = \displaystyle\frac{s^{-1} \Bigl(1-\displaystyle\frac{\omega}{s^{\alpha}}\Bigr)^{\gamma} }{\Bigl(1-\displaystyle\frac{\omega}{s^{\alpha}}\Bigr)^{\gamma} - \frac{A}{s^{\beta}} } 
    = s^{-1} \left( 1- \frac{A}{s^{\beta}\Bigl(1-\frac{\omega}{s^{\alpha}}\Bigr)^{\gamma}}\right)^{-1}  
\]
and, for sufficiently large $|s|$, we can expand
\[
{\cal H}(s) 
= s^{-1}\sum_{j=0}^{\infty} \frac{A^j}{s^{j \beta}\Bigl(1-\frac{\omega}{s^{\alpha}}\Bigr)^{j \gamma}}  
=  \sum_{j=0}^{\infty} \frac{s^{\alpha \gamma j - j \beta - 1}  A^j}{(s^{\alpha}-\omega)^{j\gamma}} .
\]

Therefore, after inverting back each  LT of Prabhakar functions in the series we are able to obtain
\[
 y(t) = \sum_{j=0}^{\infty} A^j t^{j \beta} E_{\alpha, j \beta +1}^{j \gamma} (\omega t^{\alpha}) y_0
\]
which holds as $t \to 0$.

\subsection{Asymptotic expansion for large arguments}

To derive an asymptotic expansion of the solution $y(t)$ of (\ref{sys.Prabhakar}) as $t\to\infty$  we consider again the function ${\mathcal H}(s)$.

Since we are now interested in study the solution $\hat{y} = {\mathcal H}(s)y_0$ in the LT domain as $|s|\to 0$, we have to take into account the singularity of ${\mathcal H}(s)$.

Due to the
transversality condition stated by Theorem \ref{thm.stab.region}, ${\mathcal H}(s)$ has just one singularity, say $\bar{s}$, which can be eliminated in the formula for the inversion of the LT by the residue subtraction
\[
    h(t) = \mathop{Res} \bigl( \eu^{st} {\cal H}(s) , \bar{s} \bigr) + \hat{h}(t) ,
\]
where 
\[
\hat{h}(t) =
		\frac{1}{2\pi i} \int_{\mathcal C} \eu^{st} {\cal H}(s) \du s 
\]
and  $\mathcal C$ is any contour in the complex plane leaving $\bar{s}$ at its right and not crossing the branch-cut placed on the negative real semi-axis.

An analytical expression of $\bar{s}$ seems not available and therefore $\bar{s}$ must be evaluated numerically after solving the equation $s^{\beta-\alpha\gamma}(s^{\alpha}-\omega)^{\gamma} = A$ or, equivalently,
\[
    s^{\mu} - \omega s^{\mu-\alpha} - B = 0
    , \quad \mu = \frac{\beta}{\gamma}, \quad B = A^{\frac{1}{\gamma}} .
\]

The corresponding residue can be instead evaluated by simple derivations. Indeed, since we assume $0<\alpha<1$ and real $\omega <0$, it is $\bar{s}^{\alpha}-\omega \not=0$; therefore, after standard derivations one obtains
\[
\mathop{Res} \bigl( \eu^{st} {\cal H}(s) , \bar{s} \bigr) = C^{\gamma}_{\alpha,\beta,\omega}(\bar{s}) \eu^{\bar{s}t} ,
\]
where $C^{\gamma}_{\alpha,\beta,\omega}(s)$ is constant with respect to $t$ and
\[
C^{\gamma}_{\alpha,\beta,\omega}(s) = \frac{ (s^{\alpha} - \omega)}{\beta s^{\alpha} - (\beta-\alpha \gamma) \omega}.
\]

To evaluate $\hat{h}(t)$ we first observe from (\ref{eq:LinSis_Sol_LT}) that 
\[
    {\cal H}(s)
    = - s^{\beta-\alpha\gamma-1} \frac{(s^{\alpha}-\omega)^{\gamma}}{A}
    \left(1 - s^{\beta-\alpha\gamma} \frac{(s^{\alpha}-\omega)^{\gamma}}{A} \right)^{-1} , 
\]
and hence, for  sufficiently small $|s|$, it is  possible to consider the expansion
\[
\begin{aligned} 
    {\cal H}(s)
    &=- s^{\beta-\alpha\gamma-1} \frac{(s^{\alpha}-\omega)^{\gamma}}{A} \sum_{k=0}^{\infty} s^{k\beta-k\alpha\gamma}\frac{(s^{\alpha}-\omega)^{k \gamma}}{A^k}  \\
    &= - \sum_{k=0}^{\infty} s^{(k+1)\beta-(k+1)\alpha\gamma-1 }\frac{(s^{\alpha}-\omega)^{(k+1) \gamma}}{A^{(k+1)}}  \\
    &= - \sum_{k=1}^{\infty} s^{k\beta-k\alpha\gamma-1}\frac{(s^{\alpha}-\omega)^{k \gamma}}{A^{k}} 
\end{aligned}
\]

We can therefore transform back ${\cal H}(s)$ from the LT domain to the time domain to obtain $\hat{h}(t)$ and hence the expansion of the solution $y(t)$ as $t \to \infty$
\[
    y(t) = \left[ C^{\gamma}_{\alpha,\beta,\omega}(\bar{s}) \eu^{\bar{s}t}  - \sum_{k=1}^{\infty} \frac{t^{-k\beta}}{A^k}  E_{\alpha,1-k\beta}^{-k\gamma}(t^{\alpha} \omega) \right] y_0 .        
\]

This formula  can be exploited, in connection with the asymptotic representation of the Prabhakar function introduced in Section \ref{S:Background} to provide a representation of the solution $y(t)$ of (\ref{sys.Prabhakar}).

\subsection{Numerical solution}

To devise an effective method for solving not only the linear system (\ref{sys.Prabhakar}) but,  more generally, any nonlinear system such as
\begin{equation}\label{eq:PrabSysNonLinear}
    \left\{\begin{array}{l}
     \DP^\gamma_{\alpha,\beta,\omega} y(t) = f(t,y(t)) \\
     y(0) = y_0 \
     \end{array}\right. ,
\end{equation}
we use as starting point the standard trapezoidal rule
\[
y_{n+1} - y_{n} = \frac{h}{2} \bigl(f(t_n,y_n) + f(t_{n+1},y_{n+1})\bigr)
\]
for ordinary differential equations and its generalization to our problem is made in the framework devised by Lubich \cite{Lubich1988a,Lubich1988b,Lubich2004}. The choice of the trapezoidal rule as starting point for devising a numerical method  for the solution of  (\ref{eq:PrabSysNonLinear}) is motivated by its excellent stability properties. Given the generating function of the trapezoidal rule
\[
    \delta(\xi) = \frac{2(1-\xi)}{1+\xi} ,
\]
a corresponding trapezoidal convolution quadrature rule for (\ref{eq:PrabSysNonLinear}) evaluates the approximation $y_n$ of $y(t_n)$ by the formula
\[
    y_n = y_0 + h^{\beta}\sum_{j=0}^{s} w_{n,j}  f(t_j,y_j) + h^{\beta} \sum_{j=0}^{n} c_{n-j}  f(t_j,y_j) .
\]

Convolution weights $c_n$ are the coefficients in the asymptotic expansion of
\[
    \sum_{n=0}^{\infty} c_n \xi^n = \frac{1}{h^{\beta}} G(\xi)
    , \quad
    G(\xi) = {\mathcal E}_{\alpha,\beta} \Bigl( \frac{\delta(\xi)}{h} ; \omega \Bigr) ,
\]
with ${\mathcal E}_{\alpha,\beta} (s ; \omega) $ the LT of ${\mathcal E}_{\alpha,\beta} (s ; \omega) $,
and can be evaluated with high accuracy by a quadrature rule applied to the Cauchy integral 
\[
c_n = \frac{1}{2 \pi \iu} \int_{\mathcal C} \xi^{-n-1} G(\xi) \du \xi 
\]
with ${\mathcal C}$ a suitably selected closed contour encircling the origin but not any singularity of $G(\xi)$. Starting weights $w_{n,j}, j=0,1,\dots,s$ are instead introduced to deal with the lack of smoothness at $0$ of the solution and evaluated after imposing that exact solutions are obtained when $f(t,y(t))=t^{\nu}$, with $\nu$ multiple of $\beta$ less than 1.  We refer again to \cite{Lubich1988a,Lubich1988b,Lubich2004} for a more detailed description.

\section{Numerical experiments}\label{S:NumericalExperiments}

\subsection{Asymptotic stability}

To verify the theoretical findings on the asymptotic stability we consider here, for the selection of the parameters $\alpha=0.8$, $\beta=0.9$, $\gamma=0.8$ and $\omega=-1.0$, the solution of (\ref{sys.Prabhakar}) in the scalar case, for three distinct values of the coefficient $A\in\mathbb{C}$. 

The three values of the coefficient $A$ are selected, respectively, just inside, on the border and just outside the stability region $S_{\alpha,\beta,\omega}^\gamma$ determined by Theorem \ref{thm.stab.region} and  depicted in the left plot of Figure \ref{Fig:Fig_StabilityRegion_01}. Since the three values of $A$ are almost indistinguishable in the small box in the first quadrant,  an enlarged view of this box is provided in the right plot of the same Figure \ref{Fig:Fig_StabilityRegion_01}.

\begin{figure}[htbp]
    \centering
    \includegraphics[width=0.95\linewidth]{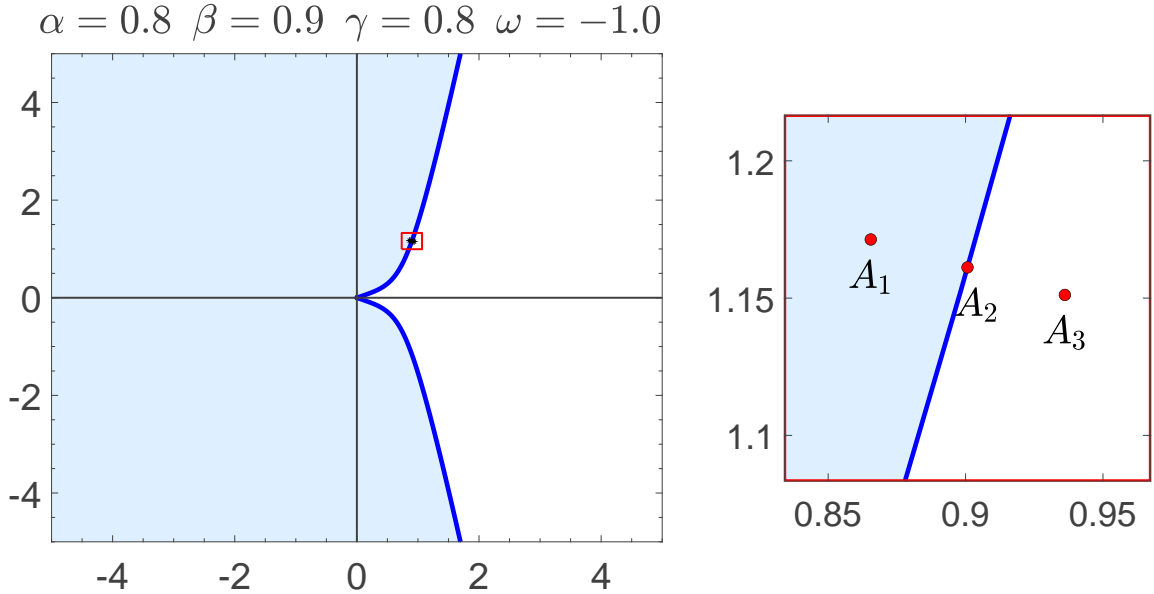}
    \caption{Stability region $S_{\alpha,\beta,\omega}^\gamma$ for $\alpha=0.8$, $\beta=0.9$, $\gamma=0.8$ and $\omega=-1$ (left plot) and zoom of the box with the three values $A_1$, $A_2$, $A_3$ near the border of the stability region (right plot).}
    \label{Fig:Fig_StabilityRegion_01}
\end{figure}

To observe the asymptotic behavior of the solution of  (\ref{sys.Prabhakar}) we have considered both the asymptotic expansion (for large arguments) and the numerical method devised in the previous section. For large $t$ the two approaches provide overlapping results, thus showing their reliability. We therefore  report, in the following plots, only the outcomes from the numerical method which hold for small and large $t$.

The first plot illustrates the solution of (\ref{sys.Prabhakar}) with the coefficient  $A_1 = 0.866  + 1.171\iu$ inside the stability region. As expected from the theory, the solution illustrated in Figure \ref{Fig:Fig_StabilityRegion_A1} shows a stable  behaviour decaying to zero. 

\begin{figure}[htbp]
    \centering
    \includegraphics[width=0.7\linewidth]{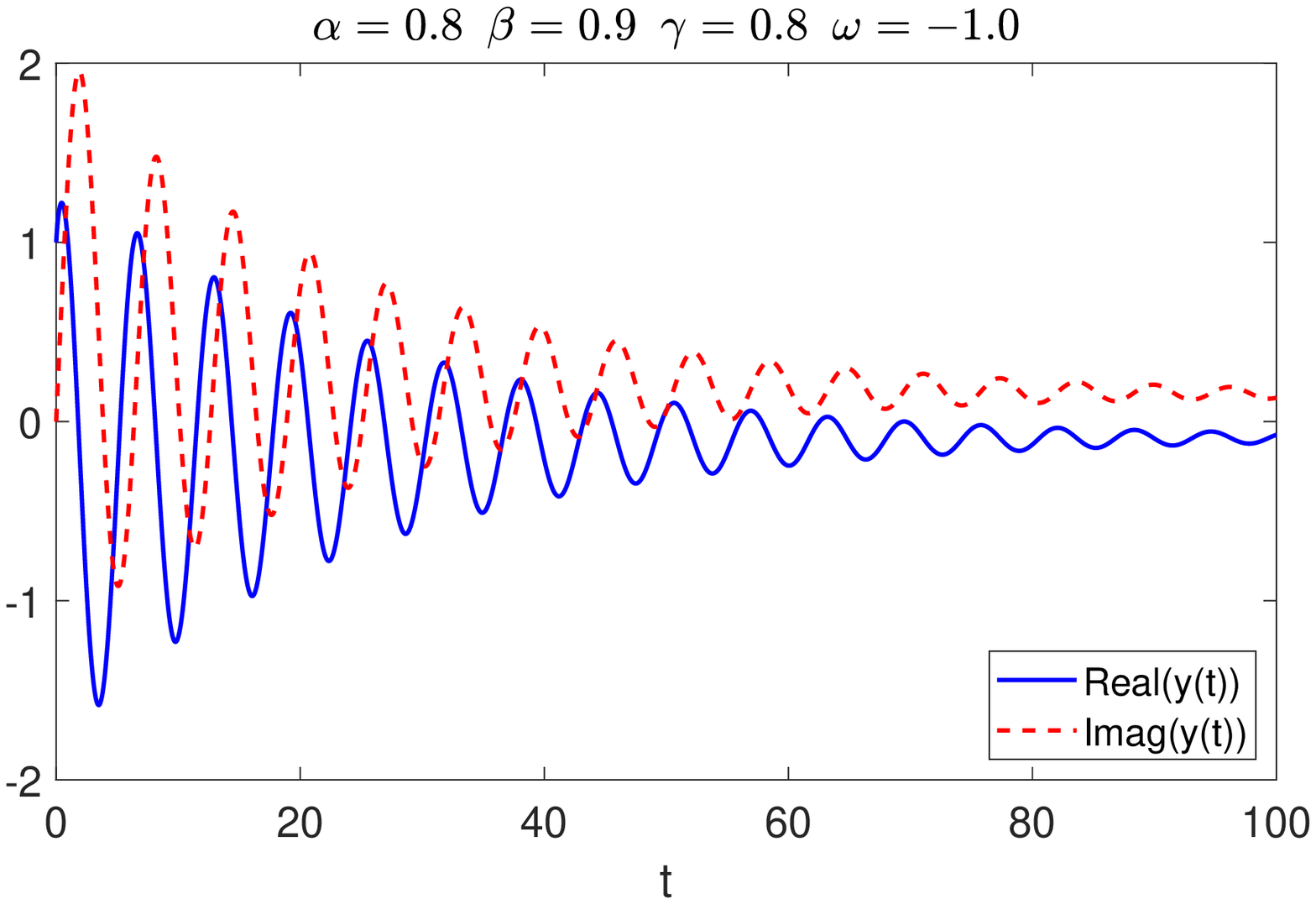}
    \caption{Solution of the linear scalar equation (\ref{sys.Prabhakar}) with $A_1=0.866  + 1.171\iu$ inside the stability region.}
    \label{Fig:Fig_StabilityRegion_A1}
\end{figure}

The second plot shows the solution of the same problem when the coefficient $A_2 = 0.901  + 1.161\iu$ is instead used. Since $A_2$ is on the border of the stability region we expect that, after a transient phase, the solution presents sustained oscillations which neither decay nor amplify. This behaviour is indeed confirmed by the numerical experiment reported in Figure \ref{Fig:Fig_StabilityRegion_A2}.

\begin{figure}[htbp]
    \centering
    \includegraphics[width=0.7\linewidth]{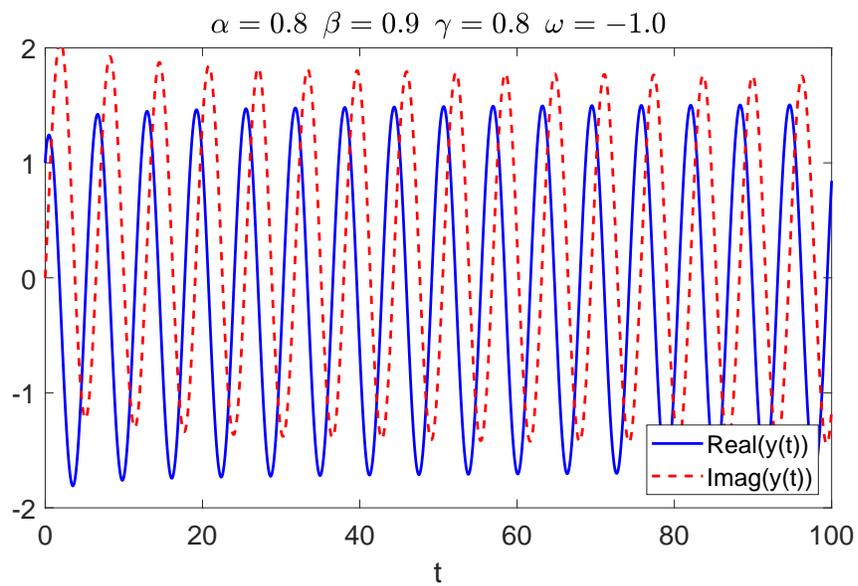}
    \caption{Solution of the linear scalar equation (\ref{sys.Prabhakar}) with $A_2=0.901  + 1.161\iu$ on the border of the stability region.}
    \label{Fig:Fig_StabilityRegion_A2}
\end{figure}

Finally the third experiment concerns the coefficient $A_3 = 0.936  + 1.151\iu$ outside the stability region. In accordance with theoretical expectations, the plot in Figure \ref{Fig:Fig_StabilityRegion_A3} shows an unstable solution with oscillations of growing amplitude as $t$ increases.

\begin{figure}[htbp]
    \centering
    \includegraphics[width=0.7\linewidth]{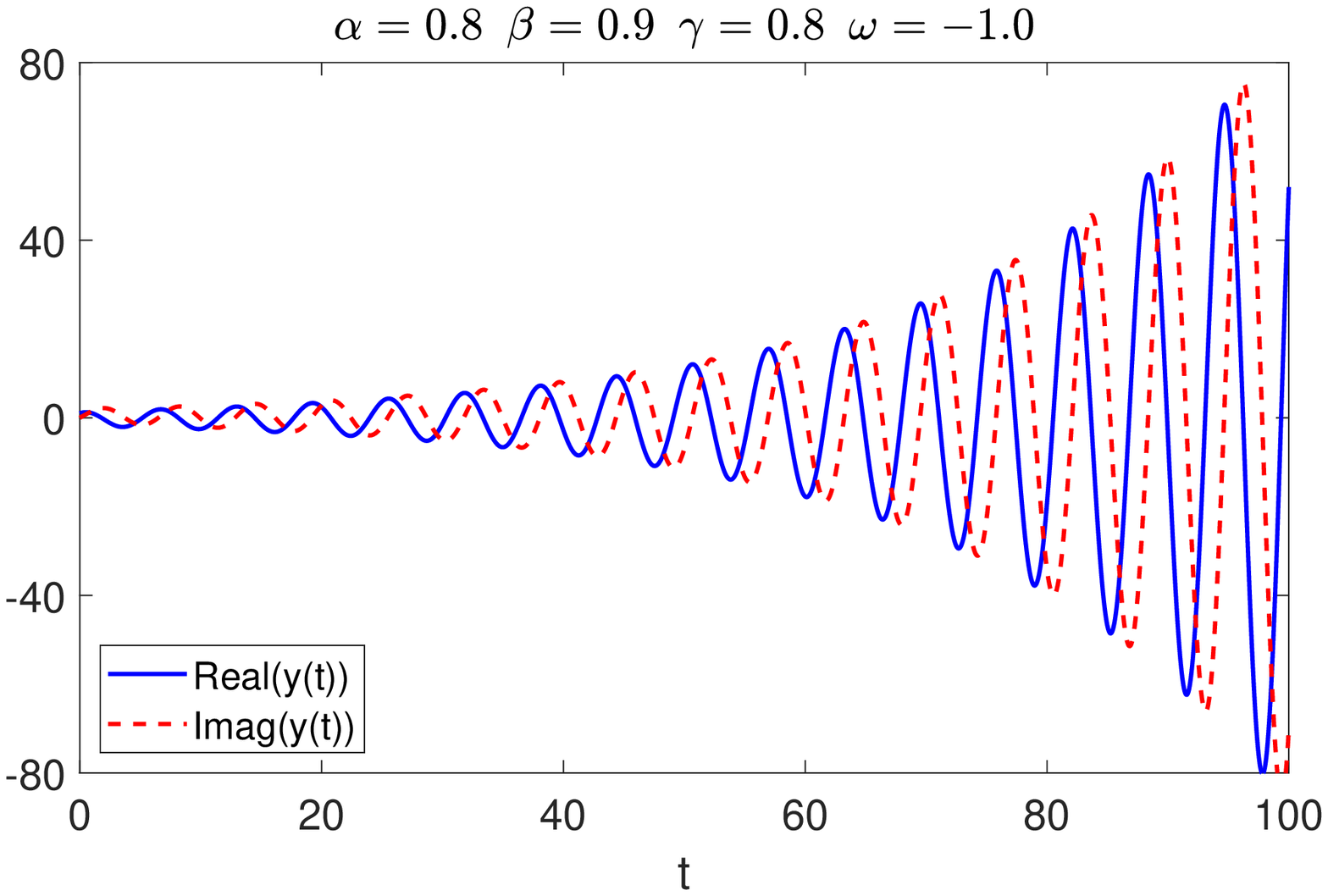}
    \caption{Solution of the linear scalar equation (\ref{sys.Prabhakar}) with $A_3 = 0.936  + 1.151\iu$ outside the stability region.}
    \label{Fig:Fig_StabilityRegion_A3}
\end{figure}

\subsection{A nonlinear example}

It is of interest to provide an example of an application to nonlinear systems of the theoretical results on the asymptotic stability of linear systems of differential equations with the Prabhakar derivative. 

To this purpose we consider a dynamical system of Brusselator type 
\begin{equation}\label{eq:BrusselatorPrabhakar}
\begin{cases}    
    \DP^\gamma_{\alpha,\beta,\omega} x(t)=1 - (b+1)x(t)  + a x(t)^2 y(t) \\
    \DP^\gamma_{\alpha,\beta,\omega}y(t)=bx(t)-a x^2(t)y(t)\
 \end{cases}   ,
\end{equation}
describing an autocatalytic and oscillating chemical reaction, with the integer-order derivative replaced by the fractional Prabhakar derivative.  

It is easy to compute that $(1,b/a)$ is the equilibrium of the system and the eigenvalues of the Jacobian evaluated at the equilibrium point are
\[
	\lambda_{1,2} = \frac{b-a-1 \pm \sqrt{(b-a-1)^2-4a}}{2} .
\]

Let us take into consideration the values of the coefficients $a=10$ and $b=14$ for which the corresponding eigenvalues are $\lambda_{1,2} = 1.5 \pm 2.7839\iu$. 

Depending on the choice of parameters $\alpha$, $\beta$, $\gamma$ and $\omega$ of the Prabhakar derivative, the eigenvalues $\lambda_1$ and $\lambda_2$  can lay inside or outside the corresponding stability region $S_{\alpha,\beta,\omega}^\gamma$. In Figure \ref{Fig:Fig_PrabStab_Bruss_Reg} we show the location of $\lambda_1$ and $\lambda_2$ with respect to the stability region of the Prabhakar derivative when $\alpha=0.9$, $\beta=0.95$ and $\gamma=0.8$ and  $\omega=-4.0$ (left plot) or  $\omega=-0.5$ (right plot).

\begin{figure}[htbp]
    \centering
		\begin{tabular}{cc}
    \includegraphics[width=0.49\linewidth]{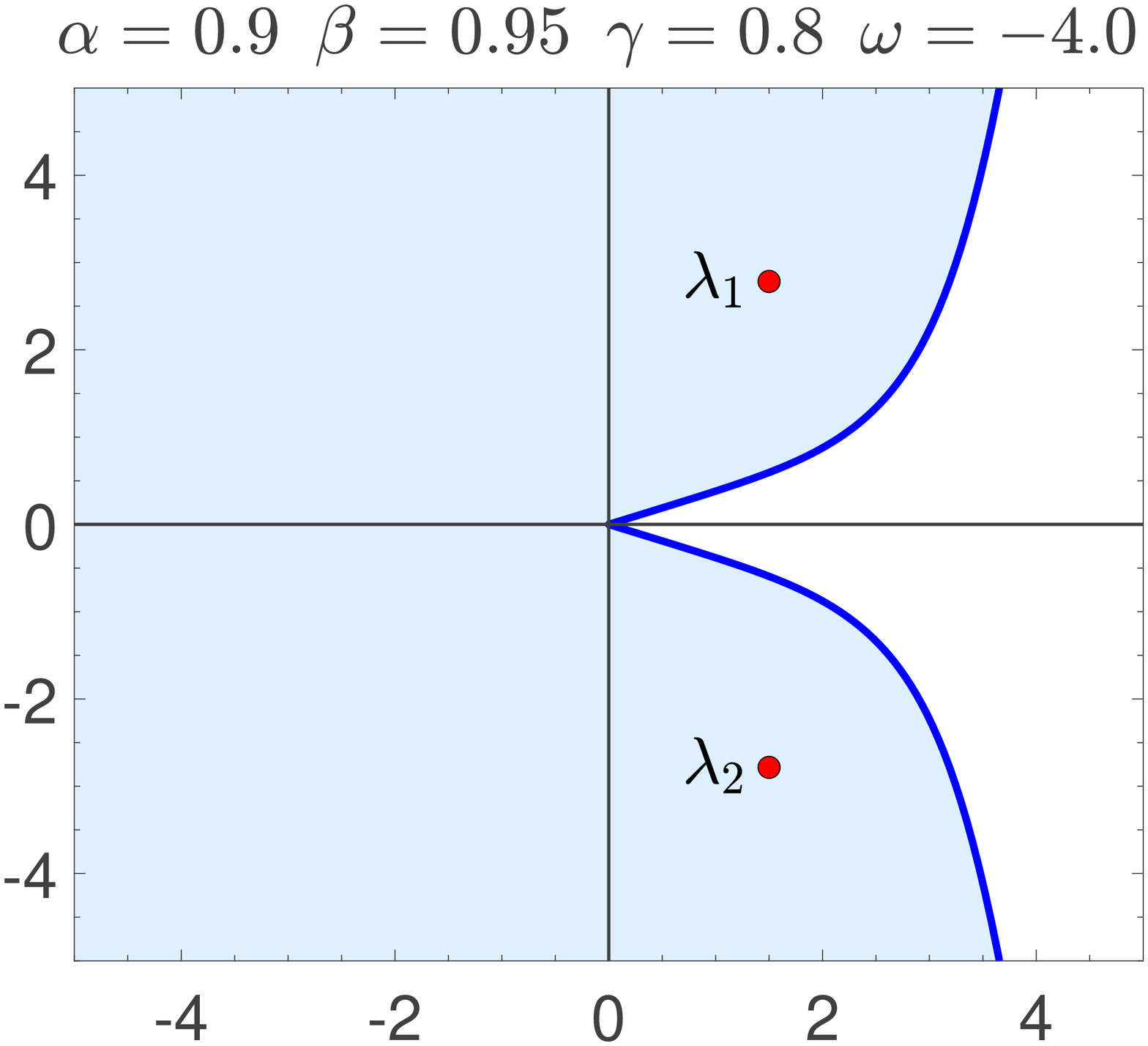} &
    \includegraphics[width=0.49\linewidth]{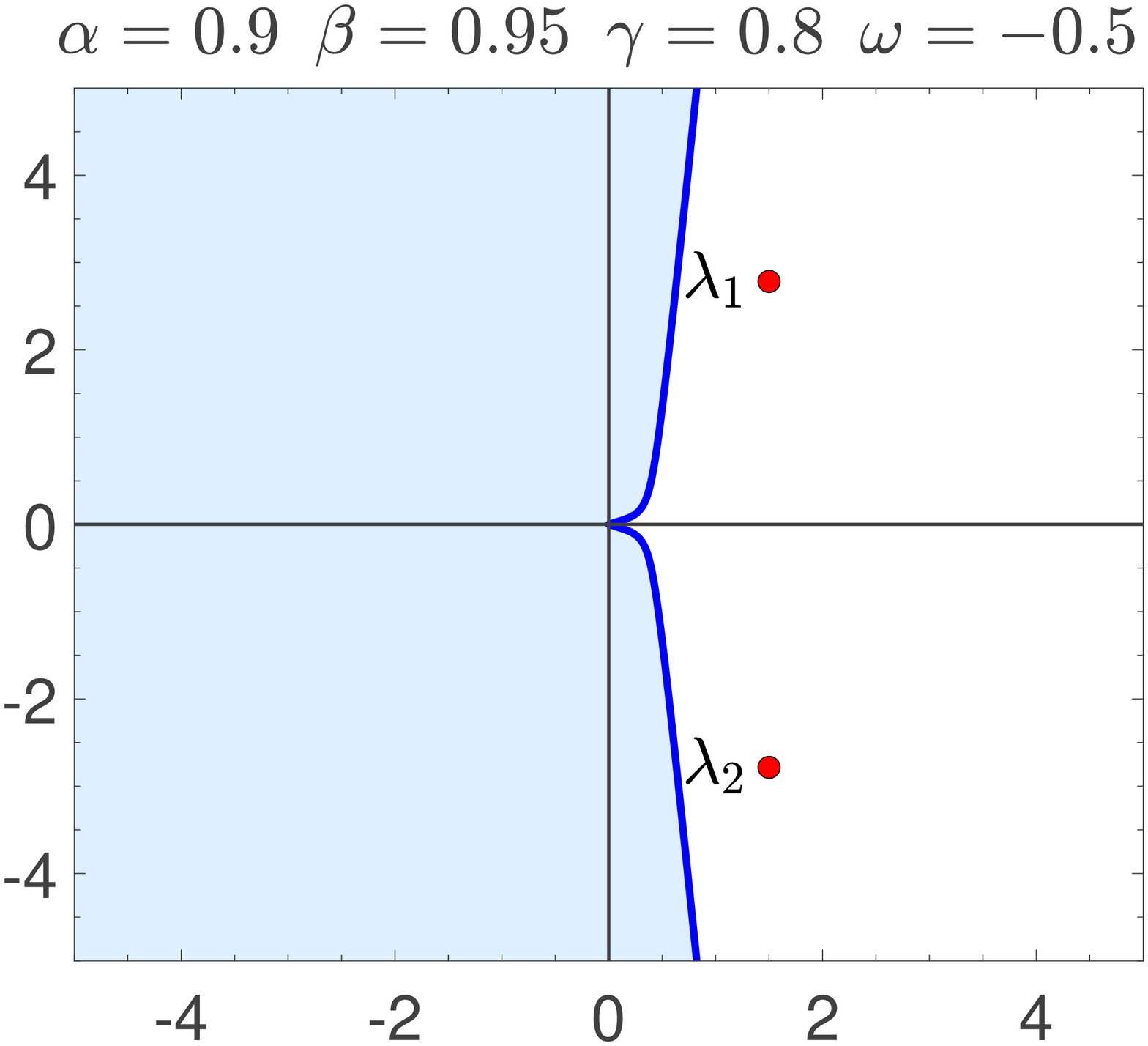} \		
		\end{tabular}
    \caption{Stability regions for $\alpha=0.9$, $\beta=0.95$, $\gamma=0.8$ and $\omega=-4.0$ (left plot) or $\omega=-0.5$ (right plot) and location of the eigenvalues $\lambda_{1,2}$ of the linearized Brusselator system.}
    \label{Fig:Fig_PrabStab_Bruss_Reg}
\end{figure}

The solution of the system (\ref{eq:BrusselatorPrabhakar}) when $\omega=-4.0$, namely when $\lambda_{1,2}$ lie inside the stability region, is shown in Figure \ref{Fig:Fig_PrabStab_Burss_Sol_01}. The two components $x(t)$ and $y(t)$ of the solution approach the equilibrium state (the dotted line), although in a quite slow way, in accordance with the behaviour expected from the asymptotic stability theory.

\begin{figure}[htbp]
    \centering
    \includegraphics[width=0.7\linewidth]{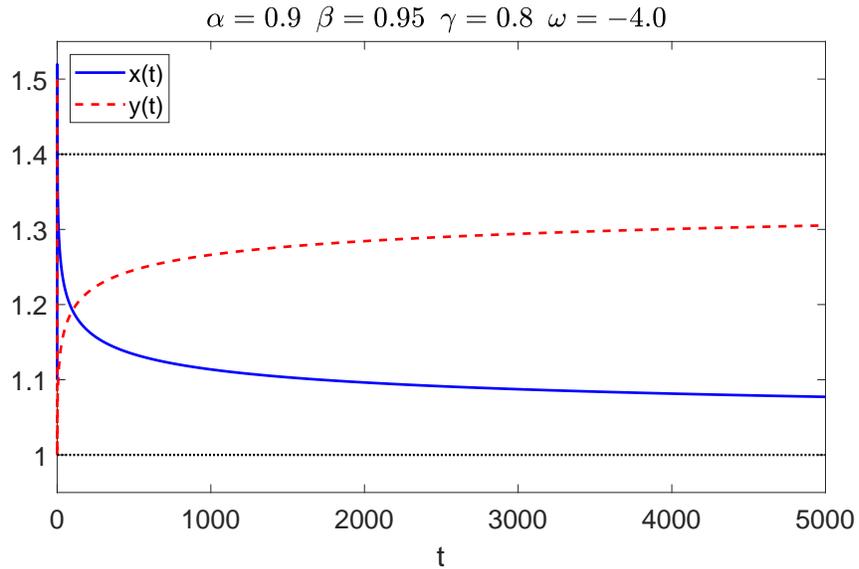}
    \caption{Solution of the Brusselator system (\ref{eq:BrusselatorPrabhakar}) for $\lambda_1$ and $\lambda_2$ in the stability region.}
    \label{Fig:Fig_PrabStab_Burss_Sol_01}
\end{figure}

When $\omega=-0.5$, and hence $\lambda_{1,2}$ are outside the stability region, the equilibrium point is instead unstable and, indeed, the solution of  (\ref{eq:BrusselatorPrabhakar}) oscillates  without ever reaching the equilibrium point as shown in Figure \ref{Fig:Fig_PrabStab_Burss_Sol_02}.

\begin{figure}[htbp]
    \centering
    \includegraphics[width=0.7\linewidth]{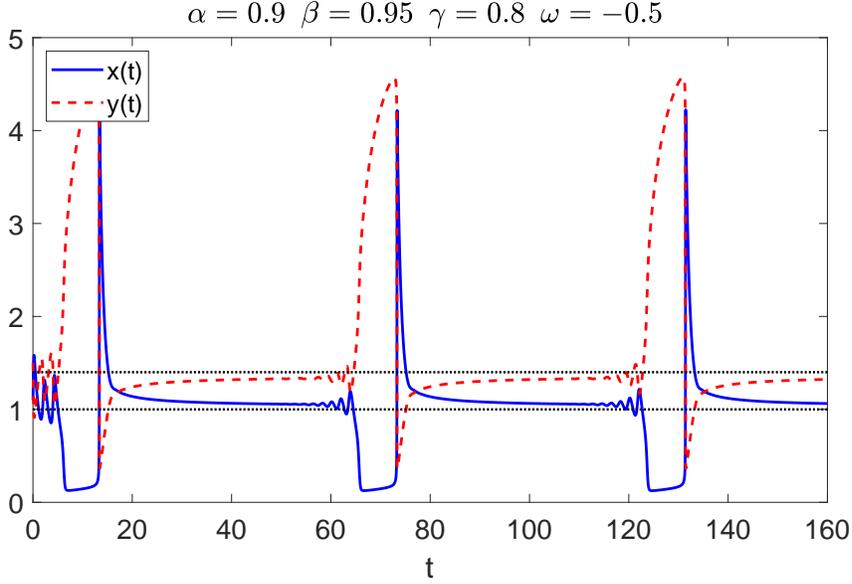}
    \caption{Solution of the Brusselator system (\ref{eq:BrusselatorPrabhakar}) for $\lambda_1$ and $\lambda_2$ outside the stability region.}
    \label{Fig:Fig_PrabStab_Burss_Sol_02}
\end{figure}

In fact, fixing the parameters $\alpha=0.9$, $\beta=0.95$ and $\gamma=0.8$ and numerically solving equation $\Lambda(\theta)=\lambda_1$,
where $\Lambda(\theta)$ defines the parametric equation of the curve $\Psi_{\alpha,\beta,\omega}^\gamma$ as given in Lemma \ref{lem.Lambda}, we obtain the critical value $\omega^\star=-1.58444$, in correspondence of which the eigenvalues $\lambda_{1,2}$ belong to the boundary of the stability region $S_{\alpha,\beta,\omega^\star}^\gamma$. We may consider that in this case, the critical value $\omega^\star=-1.58444$ of the parameter $\omega$ corresponds to a Hopf bifurcation in the Brusselator-type system \eqref{eq:BrusselatorPrabhakar}, resulting in the loss of asymptotic stability of the equilibrium for $\omega>\omega^\star$ and the appearance of an attracting quasi-periodic orbit, as shown in Figure \ref{Fig:Fig_PrabStab_Burss_Sol_02}. However, we emphasize that to the best of our knowledge, at present, the bifurcation theory of fractional-order differential equations with Prabhakar derivatives has not been investigated.

\section{Concluding remarks}

In this paper we have studied asymptotic stability properties of systems of fractional differential equations with the Prabhakar derivative. A complete characterisation of the stability region was obtained, in terms of the eigenvalues of the system's matrix, thus generalizing classical results for the stability of fractional-order systems.

We have also obtained the asymptotic representations of the solution of linear fractional Prabhakar differential equations for small and large arguments and we have presented a numerical method for (linear and nonlinear) differential equations of fractional Prabhakar type.

Some numerical experiments using the asymptotic expansion and the numerical method have been presented in order to validate the theoretical findings. An application to the study of a nonlinear system has also been discussed.



\bibliographystyle{spmpsci}
\bibliography{Prabhakar_biblio}

\end{document}